\newtheorem{theorem}{Theorem}[section]
\newtheorem{lemma}[theorem]{Lemma}
\newtheorem{conjecture}[theorem]{Conjecture}
\newtheorem{thmx}{Theorem}
\newtheorem{corx}[thmx]{Corollary}
\theoremstyle{definition}
\newtheorem{question}[theorem]{Question}
\newtheorem{definition}[theorem]{Definition}
\newtheorem{example}[theorem]{Example}
\theoremstyle{plain}
\numberwithin{equation}{section}
\theoremstyle{remark}
\DeclareMathOperator{\adeg}{\widehat{\deg}}
\DeclareMathOperator{\Spec}{Spec}
\DeclareMathOperator{\divi}{div}
\DeclareMathOperator{\adivi}{\widehat{\divi}}
\DeclareMathOperator{\vol}{vol}
\DeclareMathOperator{\avol}{\widehat{\vol}}
\DeclareMathOperator{\CC}{\mathbb{C}_{\infty}}
\DeclareMathOperator{\CH}{CH}
\newcommand{\bQ}{{\mathbf Q}}
\newcommand{\oQ}{{\overline{\mathbf Q}}}
\newcommand{\bR}{{\mathbf R}}
\newcommand{\bZ}{{\mathbf Z}}
\newcommand{\cH}{{\mathcal H}}
\newcommand{\cM}{{\mathcal M}}
\newcommand{\cE}{{\mathcal E}}
\newcommand{\cX}{{\mathfrak X}}
\newcommand{\cY}{{\mathcal Y}}
\newcommand{\cZ}{{\mathcal Z}}
\newcommand{\cV}{{\mathcal V}}
\newcommand{\cL}{{\mathcal L}}
\newcommand{\ff}{\mathfrak{f}}
\newcommand{\X}{\mathcal{X}}
\newcommand{\lra}{\longrightarrow}
\renewcommand{\CC}{{\mathbf C}}
\newcommand{\PP}{{\mathbb P}}
\newif\ifhascomments \hascommentstrue
  \newcommand{\dragos}[1]{{\color{red}[[\ensuremath{\bigstar\bigstar\bigstar} #1]]}}
  \newcommand{\matt}[1]{{\color{red}[[\ensuremath{\spadesuit\spadesuit\spadesuit} #1]]}}
  \newcommand{\dragos}[1]{}
  \newcommand{\matt}[1]{}
  \newcommand{
    \fi

\newcommand{\ol}[1]{\overline{#1}}

\newcommand{\ratmap}{\dashrightarrow}


\begin{document}

\title{Higher arithmetic degrees of dominant rational self-maps}

\author{Nguyen-Bac Dang}
\address{Institute for Mathematical Sciences, Stony Brook University, Stony Brook, NY 11794-3660, USA}
\email{nguyen-bac.dang@stonybrook.edu}

\author{Dragos Ghioca}
\address{Department of Mathematics, University of British Columbia, 1984 Mathematics Road, Vancouver, BC V6T 1Z2, Canada}
\email{dghioca@math.ubc.ca}

\author{Fei Hu}
\address{Pacific Institute for the Mathematical Sciences, 2207 Main Mall, Vancouver, BC V6T 1Z4, Canada}
\email{hf@u.nus.edu}

\author{John Lesieutre}
\address{Department of Mathematics, The Pennsylvania State University, 204 McAllister Building, University Park, PA 16802, USA}
\email{jdl@psu.edu}

\author{Matthew Satriano}
\address{Department of Pure Mathematics, University of Waterloo, Waterloo, ON N2L 3G1, Canada}
\email{msatrian@uwaterloo.ca}

\begin{abstract}
Suppose that \(f \colon X \ratmap X\) is a dominant rational self-map of a smooth projective variety defined over \({\overline{\mathbf Q}}\).  Kawaguchi and Silverman conjectured that if \(P \in X({\overline{\mathbf Q}})\) is a point with well-defined forward orbit, then the growth rate of the height along the orbit exists, and coincides with the first dynamical degree $\lambda_1(f)$ of $f$ if the orbit of $P$ is Zariski dense in $X$.

In this note, we extend the Kawaguchi--Silverman conjecture to the setting of orbits of higher-dimensional subvarieties of \(X\).  We begin by defining a set of arithmetic degrees of \(f\), independent of the choice of cycle, and we then develop the theory of arithmetic degrees in parallel to existing results for dynamical degrees.  We formulate several conjectures governing these higher arithmetic degrees, relating them to dynamical degrees.
\end{abstract}

\subjclass[2010]{
37P15, 
14G40, 
11G50, 
32H50. 
}

\keywords{dynamical degree, arithmetic degree, arithmetic varieties, heights}

\thanks{
	D.G. and M.S. were partially supported by Discovery Grants from NSERC.
	F.H. was partially supported by a UBC-PIMS Postdoctoral Fellowship.
	J.L. was partially supported by NSF grants.
}

\maketitle


\section{Introduction}
\label{section:intro}

\noindent
Suppose that \(f \colon X \ratmap X\) is a rational self-map of a \(d\)-dimensional smooth projective variety defined over \(\oQ\).  Let \(X_f(\oQ)\) denote the set of rational points \(P\) for which the full forward orbit is well-defined.  Fix a Weil height function \(h_X \colon X(\oQ) \to \bR\) and set $h_X^+ \coloneqq \max\{ h_X, 1 \}$.
Following Kawaguchi--Silverman \cite{KS16b}, the \emph{arithmetic degree} of a point \(P \in X_f(\oQ)\) is defined to be the limit
\[
\alpha_f(P) = \lim_{n \to \infty} h_X^+(f^n(P))^{1/n},
\]
supposing that it exists.  This is a measure of the ``arithmetic complexity'' of the orbit of \(P\).

Another basic invariant of \(f\) is its set of dynamical degrees.  For \(0 \leqslant k \leqslant d\), we define
\[
\lambda_k(f) = \lim_{n \to \infty} ((f^n)^\ast H^k \cdot H^{d-k})^{1/n},
\]
where \(H\) is a fixed ample divisor on $X$.  The limit is known to exist (cf.~\cite{DS05,Truong15,Dang}). Also, note that it is independent of the choice of the ample divisor \(H\).  There is expected to be a close relationship between the arithmetic and dynamical degrees: a conjecture of Kawaguchi and Silverman \cite{KS16b} states that if \(P\) has Zariski dense \(f\)-orbit, then \(\alpha_f(P) = \lambda_1(f)\).

Although the theory of dynamical degrees is comparably well-developed, many basic questions remain open about the arithmetic degree $\alpha_f(P)$ of a point $P$ (see \cite{KS14, KS16a, KS16b, matsuzawa, Silverman14, Silverman17, MSS18a, MSS18b, LS} for various results).  The  dynamical degrees of \(f\) satisfy certain inequalities, and there is a well-defined notion of relative dynamical degrees of a rational map preserving a fibration.
We refer to \cite[\S3]{Oguiso14}, \cite[\S4]{DS17}, and references therein for a comprehensive exposition on dynamical degrees, topological and algebraic entropies.

Our aim in this note is to develop a theory of higher arithmetic degrees in parallel to the existing theory for dynamical degrees,  which was suggested by Kawaguchi-Silverman in \cite[Remark 11]{KS16b}.
  We introduce the so-called \(k\)-th arithmetic degree of a self-map as a rough measure of the height growth of \((k-1)\)-dimensional algebraic subvarieties of \(X\), and show that a number of the basic properties of dynamical degrees extend to the arithmetic setting.  Although the arguments are along familiar lines, considerable technical difficulties arise in handling arithmetic intersection numbers. Our arithmetic degrees are independent of the choice of the subvarieties (unlike the \(\alpha_f(P)\) of Kawaguchi--Silverman, which depends on \(P\)).
On the other hand, we also formulate analogous invariants depending on cycles and a higher-dimensional analogue of the Kawaguchi--Silverman conjecture.


\subsection{Examples}
\label{subsec:eg}

Before giving the precise definition, we consider some examples in the first new case: the growth rate of heights of hypersurfaces under self-maps of projective spaces.  In this case, the computations are quite concrete.  Suppose that \(f \colon \PP^d \ratmap \PP^d\) is a dominant rational map.  Fix coordinates on \(\PP^d\).  Given a hypersurface \(V \subset \PP^d\) defined over $\mathbb{Q}$, write out its defining equation in coordinates using coprime integer coefficients.
Let \(c_V\) be the largest of these coefficients (in absolute value) and define the height \(h(V) = \deg(V) + \log(c_V)\).

We will define the arithmetic degree of $V$ by the limit \( \displaystyle \alpha_d(f;V) = \lim_{n \to \infty} h(f^n(V))^{1/n}\); the more general and precise definition appears later as Definition~\ref{def-alpha}.

The conjecture of Kawaguchi and Silverman is that if \(P\) is a point with dense orbit, we have \(\alpha_1(f;P) = \lambda_1(f)\).  Considering some simple two-dimensional examples shows the most obvious analogue of this conjecture is too optimistic: it may not be true that if \(V\) has dense orbit then 
\(\alpha_k(f;V) = \lambda_k(f)\).

\begin{example}
Let \(f \colon \PP^2 \ratmap \PP^2\) be a rational self-map of $\PP^2$ defined in an affine chart by \(f(x,y) = (2y,xy)\).  The iterates are given by \(f^n(x,y) = \left( x^{F_{n-1}} (2y)^{F_n}, x^{F_n} (2y)^{F_{n+1}}/2 \right) \),
where $\{F_n\} = 0,1,1,2,3,5, \ldots$ is the Fibonacci sequence.
The map $f$ is birational, and its inverse is given by the formula \(f^{-1}(x,y) = (2y/x,x/2)\); the iterates of the inverse are given by 
\[
f^{-n}(x,y) = \left( \frac{x^{(-1)^n F_{n+1}}}{(2y)^{(-1)^n F_n}}, \frac{(2y)^{(-1)^n F_{n-1}}}{2x^{(-1)^n F_{n}}} \right).
\]
Clearly, the dynamical degrees of $f$ are given by
\[
\lambda_0(f) = 1, \quad \lambda_1(f) = \frac{1+\sqrt{5}}{2}, \quad \lambda_2(f) = 1.
\]

Consider now the height growth of some subvarieties of \(\PP^2\).  If \(P\) is a point with dense orbit, it is easy to see that \(\alpha_1(f;P) = \frac{1+\sqrt{5}}{2}\).

If \(V\) is a general curve on \(\PP^2\), its image under \(f^n\) can be computed by pulling back its defining equations via the formula for \(f^{-n}\).  From the definition of the height of a hypersurface in \(\PP^d\), we then see that \(\alpha_2(f;V) = \frac{1 + \sqrt{5}}{2}\) as well.

In fact, all arithmetic degrees for this map can be computed as
\[
\alpha_0(f) = 1, \quad \alpha_1(f) = \frac{1+\sqrt{5}}{2}, \quad \alpha_2(f) = \frac{1+\sqrt{5}}{2}, \quad \alpha_3(f) = 1.
\]
\end{example}

\begin{example}
Consider next the map \(f \colon \PP^2 \ratmap \PP^2\) which is given in coordinates by the formula \(f(x,y) = (x^2,y^2)\).  Then \(f^n(x,y) = (x^{2^n},y^{2^n})\).  This map has dynamical degrees
\[
\lambda_0(f) = 1, \quad \lambda_1(f) = 2, \quad \lambda_2(f) = 4.
\]

If \(P\) is a point with dense orbit, then \(\alpha_1(f;P) = 2\).  Consider now the curve \(V \subset \PP^2\) defined by \(x+y = 1\).  Since the curve is parametrized by the map \(\gamma(t) = (t,1-t)\), the strict transform of \(V\) under \(f^n\) is parametrized by \((t^{2^n},(1-t)^{2^n})\).
The arithmetic degrees this time are given by
\[
\alpha_0(f) = 1, \quad \alpha_1(f) = 2, \quad \alpha_2(f) = 4, \quad \alpha_3(f) = 8.
\]
\end{example}

Observe that although in both examples we have \(\alpha_1(f) = \lambda_1(f)\), it is not true in general that \(\alpha_2(f) = \lambda_2(f)\).  These examples, together with a heuristic from the function field case (see also \cite{MSS18b}), naturally lend themselves to the conjecture that \(\alpha_k(f) = \max \{ \lambda_{k}(f),\lambda_{k-1}(f) \} \).

Suppose that \(\oQ(C)\) is the function field of a smooth projective curve \(C\) defined over $\oQ$.  Let \(\pi \colon \X \to C\) be a model of \(X\), i.e., a smooth family for which the generic fiber is isomorphic to \(X\).  Fix an ample divisor \(H_\X\) on \(\X\).
Given an algebraic \(k\)-cycle \(V \subset X\), take \(\cV\) to denote the closure of \(V\) in \(\X\).  The height of \(V\) is then defined to be the intersection number \(\cV \cdot H_\X^{k+1}\), computed on \(\X\). Now, \(f \colon X \ratmap X\) extends to a rational map \(f_{\X} \colon \X \ratmap \X\).  
Suppose further that the closure \(\cV\) is a positive cycle (in a reasonable sense). Then we have
\begin{align*}
\alpha_{k+1}(f;V) &= \lim_{n \to \infty} h_{X}(f^n(V))^{1/n} = \lim_{n \to \infty} (f_\X^n(\cV) \cdot H_\X^{k+1})^{1/n} \\
&= \lim_{n \to \infty} ((f_\X^n)^\ast H_\X^{k+1} \cdot \cV)^{1/n} = \lambda_{k+1}(f_\X).
\end{align*}
However, \(\lambda_{k+1}(f_\X)\) is related to the dynamical degrees of \(f\) itself by the product formula of Dinh--Nguyen \cite{DN11}.  Since \(f_\X\) preserves a fibration over the curve \(C\), we have
\[
\alpha_{k+1}(f;V) = \lambda_{k+1}(f_\X) = \max \{ \lambda_{k+1}(f), \lambda_{k}(f) \}.
\]


\subsection{Definition of higher arithmetic degrees}

We turn at last to the general definitions.  Let $X$ be a smooth projective variety of dimension $d$ defined  over $\bQ$. 
We choose a regular model $\pi \colon \cX \to \Spec\bZ$ of $X$, i.e., a regular projective scheme over $\Spec\bZ$ such that $\pi$ is flat and the generic fiber $\cX_\bQ$ is isomorphic to $X$.
In particular, $\cX$ is an arithmetic variety of relative dimension $d$ over $\Spec\bZ$.
We refer to \cite[\S1 and \S2]{moriwaki00} for a quick tour to the Arakelov intersection theory and various arithmetic positivity properties.
Fix an arithmetically ample line bundle $\ol \cH = (\cH, \| \cdot \|)$ on $\cX$.
The corresponding ample line bundle $\cH_{\bQ}$ on $X$ will be simply denoted by $H$.
For an arithmetic $k$-cycle $\ol Z = (Z, g) \in \widehat{Z}_D^{d+1-k}(\cX)$ on $\cX$, the {\it Faltings height} $h_{\ol \cH}(Z)$ of $Z$ with respect to $\ol\cH$ is defined by the Arakelov intersection
\[
h_{\ol \cH} (Z) \coloneqq \widehat\deg(\ol Z \cdot \widehat c_1(\ol \cH)^k),
\]
where $\widehat c_1(\ol \cH) \in \widehat{\CH}_D^{1}(\cX)$ is the first arithmetic Chern class of $\ol \cH$, and
\[
\widehat\deg \colon \widehat{\CH}_D^{d+1}(\cX) \lra \bR
\]
is the arithmetic degree map.
When $k\ge 1$, let $\deg_\bQ(Z)$ denote the degree of $Z_\bQ$ with respect to the ample line bundle $H$ on $X$.
Namely
\[
\deg_\bQ(Z) \coloneqq (Z_\bQ \cdot H^{k-1}).
\]

Let $\ff \colon \cX \ratmap \cX$ be a dominant rational self-map of $\cX$, and $f$ the corresponding dominant rational self-map of $X$.
Denote by $\pi_1, \pi_2 \colon \Gamma_\ff \to \cX$ the projection from the normalization of the graph of $\ff$ in $\cX \times_\bZ \cX$ onto the first and second component respectively. 

Recall that given an integer $0\leqslant k\leqslant d$, the $k$-th (algebraic) degree of $f$ with respect to $H$ is defined as follows:
\begin{equation*}
\deg_k(f) \coloneqq ( \pi_2^* H^k\cdot \pi_1^* H^{d-k} ). 
\end{equation*}
Then the $k$-th dynamical degree of $f$ is then interpreted as the growth rate of the $k$-th degree of $f$ with respect to any ample divisor, i.e.,
\begin{equation}
\label{eq:lambda}
\lambda_k(f) = \lim_{n\to \infty}\deg_k(f^n)^{1/n}.
\end{equation}
The existence of the limit is a non-trivial result due to \cite{DS05,Truong15,Dang}.
We can also consider analogous versions of these two quantities measuring the degree growth of an algebraic subvariety $V$ of dimension \(k\). Namely, we define the $k$-th degree of $f$ along $V$ as
\[
\deg_k(f;V) \coloneqq ( \pi_2^* H^k\cdot \pi_1^* V)
\]
and the $k$-th dynamical degree of $f$ along $V$ by
\begin{equation}
\label{eq:lambda-V}
\lambda_k(f;V) \coloneqq \limsup_{n \to \infty} \deg_k(f^n;V)^{1/n}.
\end{equation}
Note that we are taking $\limsup$ since in general it is not known whether the limit actually exists.
Similar phenomena also happen in the definition of arithmetic degrees below.

\begin{definition}
\label{def-alpha}
For any integer $0\leqslant k \leqslant d+1$, let $\adeg_k(f)$ denote the $k$-th degree of $f$ with respect to $\ol\cH$, i.e.,
\[
\adeg_k(f) \coloneqq \adeg \big( \widehat c_1(\pi_2^*\ol\cH)^k \cdot \widehat c_1(\pi_1^*\ol\cH)^{d+1-k} \big).
\]
Then the {\it $k$-th arithmetic degree} of $f$ is defined as follows:
\begin{equation}
\label{eq:alpha}
\alpha_k(f) \coloneqq \limsup_{n\to \infty} \adeg_k(f^n)^{1/n}.
\end{equation}

Similarly, let $\cV$ denote the closure of the above $V$ in $\cX$.
We define
\[
\adeg_{k+1}(f; V) \coloneqq \adeg \big( \widehat c_1(\pi_2^* \ol \cH)^{k+1} \cdot \pi_1^*\cV \big),
\]
and
\begin{equation}
\label{eq:alpha-V}
\alpha_{k+1}(f; V) \coloneqq \limsup_{n\to \infty} \adeg_{k+1}(f^n; V)^{1/n}.
\end{equation}
\end{definition}

When \(f\) is a morphism, it is not necessary to take a resolution when defining \(\deg_k\) and \(\adeg_k\).
In this case, we have
\[
\deg_k(f; V) = \pi_2^\ast H^k \cdot \pi_1^\ast V = H^k \cdot f_*V = \deg_k(f_*V), \text{ and }
\]
\[
\adeg_{k+1}(f; V) = \adeg \big( \widehat c_1(\pi_2^* \ol \cH)^{k+1} \cdot \pi_1^*\ol\cV \big) = \adeg \big( \widehat c_1(\ol \cH)^{k+1} \cdot \ff_*\cV \big) = h_{\ol\cH}(f_*V).
\]
Thus \(\lambda_k(f; V)\) (respectively, \(\alpha_{k+1}(f; V)\)) measure the degree (respectively, the height growth of the iterates) of an algebraic subvariety.


Here \(f_\ast V\) denotes the cycle-theoretic pushforward of \(V\), not just its strict transform; if \(f\) is ramified along \(V\), this class \(V\) multiplied by the degree of the ramification.  Subtleties can arise when some iterate(s) of \(f\) is ramified along \(V\), and when infinitely many different iterates are ramified along \(V\) the ramification is reflected in the arithmetic degree of the cycle: \(f^n_\ast V\) is equal to the class of the strict transform \(f^n(V)\) with a factor of the ramification degree.  Consider for contrast the following examples.

\begin{example}
Define \(f \colon \PP^2 \ratmap \PP^2\) by setting \(f(x,y) = (x^2,y)\), and let \(V\) be the curve defined by \(y = 0\).
Then \(V\) is invariant under \(f\) and \(f\vert_V\) is the squaring map on \(\PP^1\).  As a result, \(f_*^n(V) = 2^n V\), and we have \(\alpha_2(f;V) = 2\).

On the other hand, let \(W\) be the curve defined by \(x=y\); this curve is equivalent to \(V\) as a cycle and its iterates are also disjoint from the indeterminacy point \([0:1:0]\).  Then \(f^n(W)\) is defined by the equation \(x = y^{2^n}\), so that \(h(f^n_\ast(W)) = 2^n\).  Thus we find that \(\alpha_2(f;W) = 2\) as well.
\end{example}

It is also possible that \(f\) is ramified along a curve only a finite number of times, in which the extra factor corresponding to ramification disappears in the limit of \(h(f^n_\ast (V))^{1/n} \).

It is also conceivable that \(f\) could ramify along \(V\) infinitely many times, but with such infrequency that the ramification is not reflected in the arithmetic degree.  This is impossible, assuming the following special case of a generalization of the dynamical Mordell--Lang conjecture (see \cite{BGT16} and also \cite{BSS17, LL19}).

\begin{question}
Suppose that \(f \colon X \ratmap X\) is a rational map and that \(V \subset X\) is a subvariety with well-defined forward orbit.  Is
\[
R(f,V) = \left\{n : f^n(V) \subseteq \operatorname{Ram}(f) \right\}
\]
the union of a finite set and finitely many arithmetic progressions?
\end{question}

\subsection{Conjectures and results}

As we have mentioned before, it is known that the limit \eqref{eq:lambda} defining \(\lambda_k(f)\) indeed exists; in the other three cases \eqref{eq:lambda-V}-\eqref{eq:alpha-V}, this is unknown and it is necessary to take the \(\limsup\) in the definition.  We will show later that the limit defining \(\alpha_1(f)\) actually exists and is equal to \(\lambda_1(f)\).
We expect that the limit exists for the other \(\alpha_k(f)\) as well.  It is a more subtle question whether the limits exist for the invariants which depend on a particular cycle and not just the map \(f\).  In the case of \(\alpha_1(f;P)\), the existence of the limit is one part of Kawaguchi--Silverman conjecture. Even in the case of \(\lambda_k(f;V)\) (a question with no arithmetic content), this seems to be an open question.

\begin{conjecture}
Let \(f \colon X \ratmap X\) be a dominant rational map, and let \(V\) be a subvariety of dimension \(k\).  Then the limits defining \(\lambda_k(f;V)\), \(\alpha_k(f)\), and \(\alpha_{k+1}(f;V)\) exist and are independent of the choices of \(H\) and \(\ol \cH\) as well as the model \(\cX\).
\end{conjecture}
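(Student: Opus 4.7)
The plan is to adapt the submultiplicativity strategy of Dinh--Sibony, Truong, and Dang for the existence of $\lambda_k(f)$ to the arithmetic setting, treat the cycle-dependent versions by a parallel argument with $V$ as fixed data, and handle independence of choices by standard comparability arguments. I would carry this out in the order: first the map-only $\alpha_k(f)$, then the cycle-dependent $\alpha_{k+1}(f;V)$ and $\lambda_k(f;V)$, then independence.

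First, for the map-only arithmetic invariant $\alpha_k(f)$, the goal is to establish a submultiplicative bound
\[
\adeg_k(f^{n+m}) \leq C_k \cdot \adeg_k(f^n) \cdot \adeg_k(f^m),
\]
with $C_k = C_k(\cX, \ol\cH)$, and then apply Fekete's lemma. In the geometric case the analogous inequality for $\deg_k$ is obtained from Siu's inequality together with a mixed Khovanskii--Teissier estimate on a common resolution of the graphs of $f^n$ and $f^m$; for the Arakelov analogue the natural inputs are the arithmetic Siu inequality of Yuan and an arithmetic Khovanskii--Teissier inequality in the sense of Moriwaki. The extra difficulty relative to the geometric case is to control the archimedean contribution, i.e., to bound the Green currents pulled back by iterates uniformly in $n$. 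One approach is to compare the pullbacks of the Chern--Weil form of $\ol \cH$ under $\ff^n$ against a fixed positive $(1,1)$-form on the complex fiber, extracting the same factors of $\adeg_k(f^n)$ that appear on the divisorial side.

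For the cycle-dependent versions I would seek an analogous inequality in which $V$ enters as fixed data on one side, for example
\[
\adeg_{k+1}(f^{n+m};V) \leq C'_{k+1} \cdot \adeg_k(f^n) \cdot \adeg_{k+1}(f^m;V),
\]
and again apply Fekete. Independence of the arithmetically ample class $\ol\cH$ (resp.\ ample $H$) then follows from the existence of constants with $c_1 \widehat c_1(\ol\cH_1) \leq \widehat c_1(\ol\cH_2) \leq c_2 \widehat c_1(\ol\cH_1)$ in the arithmetic ample cone: the resulting uniform factors contribute only a bounded term that disappears under the $n$-th root. Independence of the regular model $\cX$ is handled by taking a common regular dominating arithmetic variety $\cX'$, obtained by resolving the closure of the graph of the birational map $\cX_1 \ratmap \cX_2$, and invoking the projection formula in Gillet--Soul\'e Arakelov intersection theory.

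The hardest step, and the one where I expect the approach to get stuck, is the existence of $\lambda_k(f;V)$ (and hence of $\alpha_{k+1}(f;V)$) for an arbitrary subvariety $V$. Even in the purely algebraic setting with no arithmetic content this is open: there is no known submultiplicative estimate that isolates a single fixed $V$ on one side of the inequality while keeping the other factor dependent only on $f$. The arithmetic version inherits this difficulty and adds further subtleties arising from ramification of iterates along $V$ (compare the question following Definition~\ref{def-alpha}) and from uniformly bounding Green's currents along the forward orbit of $V$. A realistic partial attack would be to prove the conjecture first in cases where $f$ is a morphism, so that $\adeg_{k+1}(f^n;V) = h_{\ol\cH}(f^n_*V)$ is directly computable, or in cases where the forward orbit of $V$ is Zariski-dense and avoids the indeterminacy locus, and then attempt to extend.
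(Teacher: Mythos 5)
The statement you are addressing is labeled a \emph{Conjecture} in the paper, and the authors do not prove it; indeed, they explicitly state that the existence of the limit defining $\alpha_k(f)$ for $k \geqslant 2$ is open, and that even the purely algebraic question of whether $\lambda_k(f;V)$ is a genuine limit appears to be open. So there is no ``paper proof'' to compare your proposal against. What the paper does establish is a special case: Corollary~\ref{corD} shows that $\alpha_1(f)$ exists as a limit, is finite, and is independent of all choices, and this is deduced from the submultiplicativity of $\adeg_1(f^n)$ proved in Theorem~\ref{thmC} (via Yuan's arithmetic Siu-type inequality \cite[Theorem~B]{yuan_volume}) together with Fekete's lemma. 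Your proposal for the map-only invariant in the case $k=1$ is essentially the argument the paper gives for Theorem~\ref{thmC} and Corollary~\ref{corD}: pass to a common graph, apply Yuan's inequality to obtain an arithmetically pseudoeffective difference, intersect with the remaining nef classes, and use the projection formula.

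The genuine gap in your plan for general $k$ is that the submultiplicative estimate
\[
\adeg_k(f^{n+m}) \leqslant C_k\,\adeg_k(f^n)\,\adeg_k(f^m)
\]
requires a higher-codimension arithmetic Siu-type inequality, which is not currently available. Yuan's \cite[Theorem~B]{yuan_volume} compares $\widehat c_1(\ol\cL)^{d+1}$ with $\widehat c_1(\ol\cL)^d\cdot\widehat c_1(\ol\cM)$ and produces a pseudoeffective class of codimension one; there is no known Arakelov analogue producing an arithmetically pseudoeffective class of higher codimension from an inequality of mixed arithmetic degrees. In the geometric setting Dang's proof uses precisely such a higher-codimension algebraic Siu inequality, and the paper's own use of Xiao's analytic generalization of Siu (Theorem~\ref{thm_upper_bound}) is confined to the archimedean fiber and does not yield submultiplicativity of $\adeg_k$; it only bounds $\adeg_k(f^p)$ in terms of a single height term plus $C\deg_k(f^p)$. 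Your proposed cycle-dependent inequality with $V$ on one side shares the same obstruction and, as you correctly observe, is open even with no arithmetic content. Your remarks on independence of $\ol\cH$, $H$, and the regular model by comparability in the arithmetic ample cone and the projection formula after a common resolution are sound in principle, but they are contingent on first establishing the existence of the limits, which is the missing ingredient.
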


We also expect that \(\alpha_{k+1}(f)\) provides an upper bound on the growth rate of the heights of algebraic subvarieties of dimension $k$.

\begin{conjecture}
\label{conj:higher-KSC}
Let \(f \colon X \ratmap X\) be a dominant rational map, and let \(V\) be a subvariety of dimension \(k\).  Then \(\alpha_{k+1}(f;V) \leqslant \alpha_{k+1}(f)\).  If \(V\) has Zariski dense orbit, then \(\alpha_{k+1}(f;V) = \alpha_{k+1}(f)\).
\end{conjecture}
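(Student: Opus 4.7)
The statement has two parts: the inequality $\alpha_{k+1}(f;V) \leqslant \alpha_{k+1}(f)$, which should follow from arithmetic positivity, and the equality under Zariski density, which is a genuine higher-dimensional analogue of the Kawaguchi--Silverman conjecture and should be expected to be open in general. My plan is to attack the upper bound by dominating $[\cV]$ against $\widehat c_1(\ol\cH)^{d-k}$ in the arithmetic Chow group, and to outline the strategy for the equality while being explicit about where it breaks down.

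\textbf{Upper bound.} Since $\ol\cH$ is arithmetically ample on $\cX$, an iterated application of the arithmetic Siu inequality (or a suitable arithmetic Nakai--Moishezon criterion) should yield a positive integer $m$, a Green current $g_\cV$ on $\cV$, and an arithmetically effective class $\ol\cE \in \widehat{\CH}_D^{d-k}(\cX)$ such that
\[
m\,\widehat c_1(\ol\cH)^{d-k} = [(\cV,g_\cV)] + \ol\cE.
\]
Pulling this identity back along $\pi_1 \colon \Gamma_{\ff^n} \to \cX$ and intersecting with the arithmetically nef class $\widehat c_1(\pi_2^*\ol\cH)^{k+1}$ then gives $\adeg_{k+1}(f^n;V) \leqslant m \cdot \adeg_{k+1}(f^n)$, since the effective term contributes non-negatively. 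Taking $n$-th roots and $\limsup$ yields $\alpha_{k+1}(f;V) \leqslant \alpha_{k+1}(f)$, provided the birational pullback of $\ol\cE$ preserves arithmetic effectivity against $\widehat c_1(\pi_2^*\ol\cH)^{k+1}$.

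\textbf{Equality and main obstacle.} For the reverse inequality under Zariski density, the natural plan is to use density of $\bigcup_{n\geqslant 0} f^n(V)$ in $X$ to select iterates $V_i := f^{n_i}(V)$ with arithmetic closures $\cV_i \subset \cX$, and positive rationals $c_i$, such that
\[
\widehat c_1(\ol\cH)^{d-k} \leqslant \sum_{i=1}^r c_i\,[\cV_i] + (\text{arithmetically effective})
\]
in $\widehat{\CH}_D^{d-k}(\cX)$ for suitable Green currents. Intersecting with $\widehat c_1(\pi_2^*\ol\cH)^{k+1}$ and using that $\alpha_{k+1}(f;V_i) = \alpha_{k+1}(f;V)$, since a finite shift does not affect the asymptotic rate, would then give $\alpha_{k+1}(f) \leqslant \alpha_{k+1}(f;V)$. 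The principal obstacle is that even the geometric shadow --- the assertion $\lambda_{k+1}(f;V) = \lambda_{k+1}(f)$ for $V$ with dense orbit --- is open for $k \geqslant 1$, as the paper notes; any arithmetic argument would have to resolve this geometric problem on the generic fibre as a consequence. Additional complications include regularizing Green currents uniformly across the indeterminacy loci of the $f^n$, and showing that the arithmetic pseudo-effective cone is preserved under the birational pullbacks $\pi_1^*$. A natural intermediate target is the morphism case, where pushforward by $\ff^n_*$ is well-defined on arithmetic cycles and Yuan's arithmetic Siu inequality furnishes the positivity directly.
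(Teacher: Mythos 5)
This statement is labelled as a \emph{conjecture} in the paper, and the paper offers no proof of it; there is nothing to compare your attempt against. The authors prove only partial information pointing toward it (Theorems~\ref{thmB}, \ref{thmE}, and the special cases in Theorems~\ref{thm:surface} and \ref{thm:polarized} concern Conjecture~\ref{conj:product-formula}, not this one), and they explicitly flag both the dependence of $\alpha_{k+1}(f;V)$ on $V$ and the lack of a proof that the defining $\limsup$ is a limit as open issues. So the correct assessment is that you were asked to ``prove'' an open problem, and you rightly did not claim to close it.

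As for the content of your sketch: the upper-bound step has two real gaps you should not gloss over. First, the existence of $m$ and a Green current with $m\,\widehat c_1(\ol\cH)^{d-k} - [(\cV,g_\cV)]$ arithmetically effective is not supplied by Siu's inequality or Nakai--Moishezon; Siu-type statements compare two classes of the same shape (powers of nef classes), not a power of an ample class against an arbitrary irreducible cycle, and in codimension $d-k>1$ the statement that $H^{d-k}$ lies in the interior of the pseudo-effective cone of cycles in a way that dominates every integral cycle of bounded degree is already delicate geometrically, let alone in the arithmetic Chow group. Second, even granting the decomposition, the pullback $\pi_1^*\ol\cE$ along the birational projection from the graph is a Gysin pullback in codimension $>1$, and effectivity is not preserved in general (flatness fails exactly where $f^n$ is not defined). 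Note that even the $k=0$ instance of the upper bound, $\alpha_1(f;P)\leqslant\alpha_1(f)$, is not formal: the paper obtains it only after establishing $\alpha_1(f)=\lambda_1(f)$ via the arithmetic Bertini argument (Lemma~\ref{lem_expansion}, Theorem~\ref{thm_upper_bound}) combined with Matsuzawa's height bound. Your identification of the equality under Zariski density as the essential difficulty, and the remark that its geometric shadow $\lambda_{k+1}(f;V)=\lambda_{k+1}(f)$ is already open, is accurate and is consistent with how the paper frames the problem.
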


Although we are not able to prove that the $\limsup$ is actually a limit independent of any choice when $k\geqslant 2$,
recent work due to Ikoma \cite[Theorem 2.9]{ikoma} based on the arithmetic Hodge index theorem of Yuan--Zhang \cite{yuan_zhang_hodge_index} shows that the arithmetic intersection shares many similarities with the algebraic intersection.
This allows us to extend a number of basic properties of dynamical degrees to the arithmetic setting.
For example, it is shown that the Khovanski--Teissier inequalities hold in arithmetic intersection theory, and the log concavity of the arithmetic degrees follows.  We list some basic properties of the arithmetic degrees in the next theorem.

\begin{thmx}
\label{thmA}
Using notation as above, the following assertions hold.
\begin{enumerate}[{\em (1)}]
\item The sequence $k \mapsto \alpha_k(f)$ is log-concave.
\item If \(f\) is birational, then \(\alpha_k(f) = \alpha_{d+1-k}(f^{-1})\).
\end{enumerate}
\end{thmx}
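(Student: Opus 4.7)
The plan is to derive (1) from the arithmetic Khovanskii--Teissier inequality and (2) from a symmetry of the graph construction. For (1), I would apply Ikoma's arithmetic Khovanskii--Teissier inequality \cite[Theorem 2.9]{ikoma} to the pair of arithmetically nef line bundles $\pi_1^*\ol\cH$ and $\pi_2^*\ol\cH$ on the normalized graph $\Gamma_{\ff^n}$ (both pullbacks are nef since $\ol\cH$ is arithmetically ample and $\pi_1,\pi_2$ are morphisms). This yields the fixed-$n$ log-concavity
\[
\adeg_k(f^n)^2 \geqslant \adeg_{k-1}(f^n) \cdot \adeg_{k+1}(f^n)
\]
for all $1 \leqslant k \leqslant d$ and all $n \geqslant 1$. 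The catch is that each $\alpha_k$ is defined as a $\limsup$, and log-concavity of $\limsup$'s does not formally follow from log-concavity of the underlying sequences, since different values of $k$ may attain their $\limsup$'s along different subsequences. To avoid this issue, I would first promote each $\limsup$ to a genuine limit.

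For this promotion, the natural route is a submultiplicativity estimate of the form
\[
\adeg_k(f^{n+m}) \leqslant C_k \cdot \adeg_k(f^n) \cdot \adeg_k(f^m),
\]
where $C_k$ depends only on the model $(\cX, \ol\cH)$. In the algebraic setting such an inequality, due in various forms to Dinh--Sibony, Truong, and Dang, is proved via mixed Khovanskii--Teissier-type inequalities on the normalized fiber product $\Gamma_{\ff^n} \times_\cX \Gamma_{\ff^m}$, together with a pushforward/pullback comparison to $\Gamma_{\ff^{n+m}}$. Once submultiplicativity is in place, Fekete's lemma upgrades the $\limsup$ defining $\alpha_k(f)$ to a genuine limit, and the fixed-$n$ log-concavity passes cleanly to $\alpha_k(f)^2 \geqslant \alpha_{k-1}(f) \cdot \alpha_{k+1}(f)$. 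I expect the main obstacle to be establishing this arithmetic submultiplicativity, since transporting the algebraic argument requires careful handling of Green currents and of excess intersection along the finite fibers of $\pi$; the tools for this should be available from Ikoma's framework and the Yuan--Zhang arithmetic Hodge index theorem referenced earlier.

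For (2) the argument is direct. When $f$ is birational, the involution of $\cX \times_\bZ \cX$ swapping the two factors identifies the normalized graph $\Gamma_{\ff^n}$ with the normalized graph $\Gamma_{(\ff^{-1})^n}$ and interchanges the two projections $\pi_1$ and $\pi_2$. Substituting this identification into Definition~\ref{def-alpha} gives
\[
\adeg_k\bigl((f^{-1})^n\bigr) = \adeg\bigl( \widehat c_1(\pi_1^*\ol\cH)^k \cdot \widehat c_1(\pi_2^*\ol\cH)^{d+1-k} \bigr) = \adeg_{d+1-k}(f^n)
\]
for every $n \geqslant 1$, and taking $n$-th roots and $\limsup$ yields $\alpha_k(f^{-1}) = \alpha_{d+1-k}(f)$, equivalently $\alpha_{d+1-k}(f^{-1}) = \alpha_k(f)$.
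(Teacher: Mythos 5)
For part (2), your argument is correct and is exactly the paper's: swapping $\pi_1$ and $\pi_2$ on the normalized graph identifies $\adeg_k\bigl((f^{-1})^n\bigr)$ with $\adeg_{d+1-k}(f^n)$, and taking $n$-th roots gives the claim.

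For part (1), you invoke the same central tool as the paper, namely Ikoma's arithmetic Khovanskii--Teissier inequality applied to $\pi_{1n}^*\ol\cH$ and $\pi_{2n}^*\ol\cH$, to get the fixed-$n$ log-concavity $\adeg_k(f^n)^2 \geqslant \adeg_{k-1}(f^n)\,\adeg_{k+1}(f^n)$. The subtlety you raise about $\limsup$'s is genuine: since $\limsup_n (x_n y_n)$ can be strictly smaller than $(\limsup_n x_n)(\limsup_n y_n)$, fixed-$n$ log-concavity does not by itself give $\alpha_k(f)^2 \geqslant \alpha_{k-1}(f)\alpha_{k+1}(f)$. For instance, with $a_n = 1$ for $n$ even and $4^n$ for $n$ odd, $c_n$ the complementary sequence, and $b_n = 2^n$, one has $b_n^2 = a_n c_n$ for all $n$, yet $\limsup b_n^{1/n} = 2 < 4 = \limsup a_n^{1/n} = \limsup c_n^{1/n}$. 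The paper's own proof simply says ``taking the $n$-th root as $n$ tends to infinity yields $\alpha_k(f)^2 \geqslant \alpha_{k-1}(f)\alpha_{k+1}(f)$'' -- i.e., it does not address the very point you flagged; the passage is valid only once the $\limsup$'s are known to be limits, which is the paper's Conjecture~1.6 and is established there only for $k \leqslant 1$ (Theorem~\ref{thmE} and Corollary~\ref{corD}). Your proposed remedy -- arithmetic submultiplicativity $\adeg_k(f^{n+m}) \leqslant C_k\,\adeg_k(f^n)\,\adeg_k(f^m)$ for every $k$, then Fekete's lemma -- would close the gap, but you acknowledge you do not prove it, and the paper proves such a bound only for $k=1$ (Theorem~\ref{thmC}). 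In short: your route is the paper's route, your diagnosis is sharper than the paper's own exposition, and the step converting fixed-$n$ log-concavity into log-concavity of the $\alpha_k$'s remains unjustified in both accounts for $2 \leqslant k \leqslant d$.
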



We also formulate an analogue of the Kawaguchi--Silverman conjecture, which predicts that the height growth of cycles is controlled by dynamical degrees.  In this case, the natural prediction derives from the case of function fields (see Section~\ref{subsec:eg}), in which it reduces to the familiar Dinh--Nguyen product formula for dynamical degrees (cf.~\cite{DN11}).

\begin{conjecture}
\label{conj:product-formula}
Let \(f\) be a dominant rational self-map of \(X\).  Then for any integer \( 1 \leqslant k \leqslant d\), one has
\[
\alpha_k(f) = \max \{ \lambda_k(f), \lambda_{k-1}(f) \}.
\]
\end{conjecture}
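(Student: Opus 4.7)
The plan is to establish the conjecture by developing an arithmetic analogue of the Dinh--Nguyen product formula \cite{DN11} and applying it to the structural morphism $\pi\colon \cX \to \Spec\bZ$, viewed as an arithmetic fibration over a one-dimensional base on which the self-map acts trivially. In the function field heuristic of Section~\ref{subsec:eg}, the identity $\alpha_{k+1}(f;V) = \max\{\lambda_{k+1}(f),\lambda_k(f)\}$ is exactly what Dinh--Nguyen provides when the map induced on the base curve is the identity; the goal is to transplant this argument to the Arakelov setting, where $\Spec\bZ$ plays the role of the base curve.

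The first step would be to set up relative arithmetic degrees for the rational self-map $\ff\colon \cX \ratmap \cX$ covering $\id_{\Spec\bZ}$. Concretely, one considers mixed Arakelov intersection numbers on the normalized graph $\Gamma_\ff$ and compares them against their purely geometric counterparts on $X$. Since the base action is trivial, the horizontal degrees on $\Spec\bZ$ all equal $1$, and the expected product formula collapses to the desired identity $\alpha_k(f) = \max\{\lambda_{k-1}(f),\lambda_k(f)\}$.

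The second step is to establish submultiplicativity of the form
\[
\adeg_k(\ff^{m+n}) \leqslant C \cdot \max\bigl\{\adeg_k(\ff^m)\adeg_{k-1}(\ff^n),\ \adeg_{k-1}(\ff^m)\adeg_k(\ff^n)\bigr\}
\]
up to polynomially bounded error, which on taking $n$-th roots yields the upper bound $\alpha_k(f) \leqslant \max\{\lambda_k(f),\lambda_{k-1}(f)\}$. The geometric analogue of such an inequality rests on Khovanski--Teissier-type concavity for mixed intersection numbers, and the required arithmetic version is available through Ikoma \cite[Theorem 2.9]{ikoma} (the same input that already underlies Theorem~\ref{thmA}). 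For the matching lower bound, I would exhibit explicit arithmetic cycles on $\cX$ realizing each side of the max: the closure of a generic fiber of $\pi$ and its iterates under $\ff$ should contribute the geometric $\lambda_k(f)$, while restricting one of the factors $\widehat c_1(\pi_2^*\ol\cH)$ in the defining intersection number to a special fiber $\pi^{-1}(\fp)$ reduces to a purely geometric intersection on a smooth projective variety over $\Fp$, producing the $\lambda_{k-1}(f)$ contribution.

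The hardest step will be the arithmetic Dinh--Nguyen inequality itself. Over $\bC$ the proof relies on positive currents and Skoda-type $L^1$-estimates for quasi-plurisubharmonic functions; in the arithmetic setting one must simultaneously control the finite intersection numbers and the archimedean Green currents of the pullbacks $(\ff^n)^*\ol\cH$, regularizing those Green currents along the indeterminacy locus and showing that the archimedean error terms grow only sub-exponentially so that they disappear after taking $n$-th roots. A secondary obstacle is that $\alpha_k(f)$ is only known as a $\limsup$ when $k\geqslant 2$; even after proving both inequalities one would still need to combine them with the log-concavity of Theorem~\ref{thmA}(1) to conclude that the $\limsup$ is a genuine limit attaining $\max\{\lambda_k(f),\lambda_{k-1}(f)\}$.
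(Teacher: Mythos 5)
The statement you are addressing is labelled as a \emph{conjecture} in the paper, not a theorem, so there is no complete proof of it in the paper to compare against. What the paper actually establishes is: the one-sided inequality $\alpha_k(f) \geqslant \max\{\lambda_k(f),\lambda_{k-1}(f)\}$ for all $k$ (Theorem~\ref{thmB}), and the full equality only in three special situations, namely $k=1$ (Theorem~\ref{thmE}), birational self-maps of surfaces (Theorem~\ref{thm:surface}), and polarized endomorphisms (Theorem~\ref{thm:polarized}). Your writeup is therefore not in a position to be ``the proof'' of the statement, and indeed it is a research program rather than a proof.

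To your credit, you say so yourself: the third paragraph flags ``the arithmetic Dinh--Nguyen inequality'' as the unresolved hardest step, and the closing sentence notes the $\limsup$-versus-$\lim$ issue. As written, then, the argument has a genuine gap precisely where the content of the conjecture lies. The submultiplicativity estimate
\[
\adeg_k(\ff^{m+n}) \leqslant C \cdot \max\bigl\{\adeg_k(\ff^m)\,\adeg_{k-1}(\ff^n),\ \adeg_{k-1}(\ff^m)\,\adeg_k(\ff^n)\bigr\}
\]
is not known. Ikoma's arithmetic Khovanskii--Teissier inequalities \cite{ikoma} give log-concavity of $j\mapsto\adeg_j(\ff^n)$ for a \emph{fixed} $n$ (this is exactly how Theorem~\ref{thmA}(1) is proved), but they do not by themselves control the interaction between classes pulled back to the three different normalized graphs $\Gamma_{\ff^m}$, $\Gamma_{\ff^n}$, $\Gamma_{\ff^{m+n}}$. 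The paper manages this comparison only for $k=1$ (Theorem~\ref{thmC}), where Yuan's volume estimate \cite{yuan_volume} forces pseudo-effectivity of $c\,u^*\pi_2^*\ol\cH_\cY - v^*\pi_4^*\ol\cH_\cZ$ for a computable scalar $c$, and one then pairs against a single arithmetically nef class. In codimension $k\geqslant 2$ one would need to control a difference of $k$-fold products, which is exactly the missing arithmetic Siu/Dinh--Nguyen-type input; nothing in the proposal supplies it.

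On the lower bound, which the paper does prove, your route differs from the paper's and has its own difficulty. You propose to extract $\lambda_{k-1}(f)$ by restricting one factor $\widehat c_1(\pi_2^*\ol\cH)$ to a vertical fiber over a prime $\fp$; that computation lands in intersection theory on the reduction $X_{\Fp}$, and there is no general reason the relevant dynamical degree of the reduction should equal $\lambda_{k-1}(f)$, since indeterminacy loci and the graph itself can degenerate badly modulo $\fp$. The paper's Theorem~\ref{thmB} instead goes through the arithmetic Bertini expansion of Lemma~\ref{lem_expansion}, which expresses $\adeg_k(f^p)$ as $h_{\ol\cH}(f^p{\pi_1}_*(Z_{1,\epsilon}))$ plus manifestly non-negative archimedean integrals, and then invokes Faltings' inequality bounding height from below by degree \cite{faltings_diophantine}; swapping the roles of $\pi_1$ and $\pi_2$ gives the $\lambda_k$ contribution. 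That route avoids reduction modulo $\fp$ entirely, and it is the one you would want to adopt if you pursue this further. For the matching upper bound, the paper's partial result (Theorem~\ref{thm_upper_bound}) combines Lemma~\ref{lem_expansion} with Xiao's higher-codimension Siu estimate \cite{xiao}, but closes the argument only at $k=1$ because Matsuzawa's height-growth bound \cite{matsuzawa} for points has no known analogue in higher dimension; that is the concrete obstruction your program would ultimately have to overcome.
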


We prove this conjecture in the case that \(\dim X = 2\) and \(f\) is birational (see Theorem~\ref{thm:surface}) and in the case that $f \colon X \to X$ is a polarized endomorphism (see Theorem~\ref{thm:polarized}).

Our next result provides an inequality of Conjecture~\ref{conj:product-formula}.

\begin{thmx}
\label{thmB}
Let \(f\) be a dominant rational self-map of \(X\).  Then for any integer \( 1 \leqslant k \leqslant d\), one has
\begin{equation*}
\alpha_k(f) \geqslant \max \{ \lambda_k(f), \lambda_{k-1}(f) \}.
\end{equation*}
\end{thmx}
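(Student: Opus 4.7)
The plan is to perturb $\ol\cH$ by a trivial line bundle with a nontrivial constant metric and exploit the resulting cancellations in the arithmetic intersection. For $c > 0$, let $\ol\cO(c) \coloneqq (\cO_\cX, e^{-c}|\cdot|)$; its first arithmetic Chern class is represented by the arithmetic divisor $(0, 2c)$. Because the underlying algebraic divisor is zero and the curvature form of $\ol\cO(c)$ vanishes, the Gillet--Soulé intersection product gives $\widehat c_1(\ol\cO(c))^2 = 0$ in the arithmetic Chow ring of any arithmetic variety, and more generally $\widehat c_1(\ol\cO(c)) \cdot [Z, g] = [0,\, 2c\,\delta_Z]$ for any arithmetic cycle $[Z, g]$.

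Writing $\omega_i \coloneqq \widehat c_1(\pi_i^*\ol\cH)$ on $\Gamma_{f^n}$ and $\epsilon \coloneqq \widehat c_1(\ol\cO(c))$, the binomial expansion of $(\omega_2 + \epsilon)^k (\omega_1 + \epsilon)^{d+1-k}$ collapses using $\epsilon^2 = 0$, and evaluating the two nonzero cross-terms via $\widehat\deg([0, 2c\,\delta_Z]) = c \deg_\bQ(Z_\bQ)$ yields the key identity
\[
\adeg_k\!\bigl(f^n,\, \ol\cH \otimes \ol\cO(c)\bigr)
= \adeg_k(f^n,\, \ol\cH)
+ c\,\bigl[\,k \deg_{k-1}(f^n) + (d+1-k)\deg_k(f^n)\,\bigr].
\]

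Next, the plan is to show that $\alpha_k(f)$ is independent of the arithmetically ample $\ol\cH$. Given two such choices $\ol\cH_1, \ol\cH_2$, one picks $N \in \bZ_{>0}$ so that $N\ol\cH_1 - \ol\cH_2$ is arithmetically nef. A telescoping binomial expansion writes
\[
\widehat c_1(\pi_2^* N\ol\cH_1)^k \widehat c_1(\pi_1^* N\ol\cH_1)^{d+1-k} - \widehat c_1(\pi_2^*\ol\cH_2)^k \widehat c_1(\pi_1^*\ol\cH_2)^{d+1-k}
\]
as a sum of arithmetic top intersections, each containing one factor in $\pi_i^*(N\ol\cH_1 - \ol\cH_2)$ and all other factors arithmetically nef. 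The arithmetic nonnegativity of top intersections of arithmetically nef classes---a consequence of the Khovanskii--Teissier framework of Ikoma, or of the Yuan--Zhang arithmetic Hodge index theorem already cited---then gives $\adeg_k(f^n, \ol\cH_2) \leqslant N^{d+1}\adeg_k(f^n, \ol\cH_1)$. Taking $n$-th roots and $\limsup$, with symmetry, yields $\alpha_k(f, \ol\cH_1) = \alpha_k(f, \ol\cH_2)$.

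For any $c > 0$, the line bundle $\ol\cH \otimes \ol\cO(c)$ is again arithmetically ample, and the same nonnegativity gives $\adeg_k(f^n, \ol\cH) \geqslant 0$ (using that $\pi_i^*\ol\cH$ is arithmetically nef as the pullback of an arithmetically nef class). Combining with the identity above,
\[
\adeg_k\!\bigl(f^n,\, \ol\cH \otimes \ol\cO(c)\bigr) \geqslant c\,\bigl[\,k \deg_{k-1}(f^n) + (d+1-k)\deg_k(f^n)\,\bigr].
\]
Since $1 \leqslant k \leqslant d$, both $k$ and $d+1-k$ are positive. Taking $n$-th roots and $\limsup$, combined with the independence established above, gives $\alpha_k(f) = \alpha_k(f, \ol\cH \otimes \ol\cO(c)) \geqslant \lambda_{k-1}(f)$ and $\alpha_k(f) \geqslant \lambda_k(f)$, whence $\alpha_k(f) \geqslant \max\{\lambda_k(f), \lambda_{k-1}(f)\}$. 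The main obstacle is securing the arithmetic positivity inputs used throughout (nonnegativity of top intersections of arithmetically nef classes, comparability of arithmetically ample classes, and preservation of arithmetic nefness under pullback); however these are all standard Arakelov-theoretic results due to Moriwaki, Zhang, Yuan, and Ikoma, already invoked by the paper for Theorem~\ref{thmA}.
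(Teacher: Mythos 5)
Your proof is correct, and it takes a genuinely different route from the paper's. The paper proves Theorem~\ref{thmB} by first establishing the arithmetic Bertini expansion of Lemma~\ref{lem_expansion}, which writes $\adeg_k(f^p)$ as the height $h_{\ol\cH}\bigl(f^p{\pi_1}_*(Z_{1,\epsilon})\bigr)$ of a complete-intersection cycle on the graph plus a sum of manifestly nonnegative Green-current integrals, and then invokes Faltings' inequality $h_{\ol\cH}(Z) \geqslant C(H^k \cdot Z)$ for effective cycles to bound the height from below by the algebraic degree; swapping $\pi_1$ and $\pi_2$ yields both inequalities. You instead twist $\ol\cH$ by the ``arithmetic constant'' $\ol\cO(c)$ and exploit the vanishing $\widehat c_1(\ol\cO(c))^2 = 0$ to get the exact binomial identity $\adeg_k(f^n, \ol\cH \otimes \ol\cO(c)) = \adeg_k(f^n, \ol\cH) + c\bigl[k\deg_{k-1}(f^n) + (d+1-k)\deg_k(f^n)\bigr]$; combined with the nonnegativity of top arithmetic intersections of arithmetically nef classes (to drop the $\adeg_k(f^n,\ol\cH)$ term) and the independence of $\alpha_k$ from the choice of $\ol\cH$ on a fixed model (which you establish via a telescoping comparison), this directly yields the bound. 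The two approaches draw on overlapping Arakelov positivity inputs, but yours sidesteps the Green-current/Bertini machinery and the appeal to Faltings' diophantine inequality entirely, replacing them with a purely intersection-theoretic calculation; it also delivers as a byproduct a proof that $\alpha_k(f)$ does not depend on the choice of arithmetically ample $\ol\cH$ on $\cX$, a fact the paper only conjectures in general and proves for $k=1$ via the submultiplicativity of Theorem~\ref{thmC}. One small calibration to your write-up: the nonnegativity of $\adeg$ of products of arithmetically nef classes is a more elementary Arakelov fact than the Yuan--Zhang Hodge index theorem (it is analogous to the nonnegativity of nef intersections in the projective setting and can be found in Moriwaki's or Ikoma's foundational treatments); citing it as a consequence of the Hodge index theorem somewhat overstates what is needed, though it does not affect the correctness of the argument.
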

 
Observe that combining Conjectures \ref{conj:higher-KSC} and \ref{conj:product-formula} provides a precise prediction about the growth rate of heights of iterates \(f^n(V)\) for \(V\) with dense orbit:
\[
\alpha_{k+1}(f;V) = \alpha_{k+1}(f) = \max \{ \lambda_{k+1}(f),\lambda_{k}(f) \}.
\]
We have seen in Section~\ref{subsec:eg} that either of these two numbers can appear, even in the case that \(V\) is a curve on a surface.  These conjectures should be understood as the higher-dimensional formulation of the Kawaguchi--Silverman conjecture.

The proof of Theorem \ref{thmB} relies on an arithmetic Bertini argument (see Lemma \ref{lem_expansion}) which allows us to relate the arithmetic degree with a height of a given particular cycle. Then one applies an inequality due to Faltings \cite{faltings_diophantine} to bound below the height of the cycles by its algebraic degree.

In the particular case where $k=1$, we show that the sequence $\adeg_1(f^n)$ is submultiplicative.

\begin{thmx}
\label{thmC}
Let $X$, $Y$, $Z$ be smooth projective varieties defined over $\bQ$ of dimension $d$.
Choose the regular models $\cX$, $\cY$ and $\cZ$ of $X,Y$ and $Z$ respectively.
Fix three arithmetically ample line bundles $\ol \cH_\cX$, $\ol \cH_\cY$, and $\ol \cH_\cZ$ on $\cX$, $\cY$ and $\cZ$ respectively.  
Then there exists a constant $C>0$ such that for any dominant rational maps $f \colon X \ratmap Y$ and $g \colon Y \ratmap Z$, one has
\[
\adeg_1( g\circ f) \leqslant C \adeg_1(f) \adeg_1(g),
\]
where the intersection numbers are taken with respect to the chosen arithmetically ample line bundles on each model.
\end{thmx}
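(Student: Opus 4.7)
The plan is to deduce submultiplicativity from Yuan's arithmetic version of Siu's bigness inequality. First I would fix a common regular projective arithmetic model $\Gamma$ over $\Spec \bZ$ simultaneously dominating (the normalizations of) the graphs of $f$, $g$, and $g\circ f$---for instance a regular alteration of the closure of the graph of $(f, g\circ f)$ in $\cX \times_\bZ \cY \times_\bZ \cZ$---with the three projections $\pi_\cX, \pi_\cY, \pi_\cZ$ to $\cX, \cY, \cZ$. Note that $\pi_\cX$ is birational while $\pi_\cY$ and $\pi_\cZ$ are generically finite and factor through the two-variable graphs. The projection formula along birational maps then rewrites
\[
\adeg_1(f) = \adeg\!\bigl(\widehat c_1(\pi_\cY^*\ol\cH_\cY) \cdot \widehat c_1(\pi_\cX^*\ol\cH_\cX)^d\bigr), \quad \adeg_1(g \circ f) = \adeg\!\bigl(\widehat c_1(\pi_\cZ^*\ol\cH_\cZ) \cdot \widehat c_1(\pi_\cX^*\ol\cH_\cX)^d\bigr),
\]
both computed as intersection numbers on $\Gamma$.

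Next, I would apply Yuan's arithmetic Siu inequality to the arithmetically nef Hermitian line bundles $\pi_\cY^*\ol\cH_\cY$ and $\pi_\cZ^*\ol\cH_\cZ$ on $\Gamma$ (of relative dimension $d$ over $\Spec\bZ$): for every
\[
t > (d+1)\,\frac{\widehat c_1(\pi_\cY^*\ol\cH_\cY)^d \cdot \widehat c_1(\pi_\cZ^*\ol\cH_\cZ)}{\widehat c_1(\pi_\cY^*\ol\cH_\cY)^{d+1}},
\]
the Hermitian $\bQ$-line bundle $t \pi_\cY^*\ol\cH_\cY - \pi_\cZ^*\ol\cH_\cZ$ is arithmetically big, so some positive integer multiple is linearly equivalent to an effective arithmetic divisor. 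The essential point is that this threshold is independent of $\Gamma$ and of $f$: using the projection formula along the factorization $\Gamma \to \Gamma_g \to \cY$ (the first arrow having degree $\deg \pi_\cY$, the second being birational), both the numerator and the denominator pick up the common factor $\deg \pi_\cY$, which cancels to leave
\[
t_0 := (d+1)\,\frac{\adeg_1(g)}{\adeg\bigl(\widehat c_1(\ol\cH_\cY)^{d+1}\bigr)},
\]
a quantity depending only on $\cY$, $\ol\cH_\cY$, and $\adeg_1(g)$.

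Finally, I would intersect $t \pi_\cY^*\ol\cH_\cY - \pi_\cZ^*\ol\cH_\cZ$ (for any $t > t_0$) with the arithmetically nef cycle class $\widehat c_1(\pi_\cX^*\ol\cH_\cX)^d$ on $\Gamma$. Standard positivity of the pairing between arithmetically effective divisors and nef cycle classes then gives $t \cdot \adeg_1(f) \geqslant \adeg_1(g \circ f)$, and letting $t \downarrow t_0$ produces the desired inequality with the explicit constant $C = (d+1)/\adeg\!\bigl(\widehat c_1(\ol\cH_\cY)^{d+1}\bigr)$, which depends only on $\cY$ and $\ol\cH_\cY$. The main obstacle will be rigorously invoking Yuan's arithmetic Siu inequality on a model $\Gamma$ built from the three graphs---ensuring sufficient regularity and that the pulled-back Hermitian line bundles carry the expected arithmetic positivity---together with justifying the effective-versus-nef positivity for the arithmetic intersection pairing; both points sit naturally inside the framework of Yuan--Zhang's arithmetic Hodge index theorem and Ikoma's arithmetic Khovanski--Teissier inequalities already cited in the paper.
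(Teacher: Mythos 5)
The proposal is correct and follows essentially the same approach as the paper: build a common model $\Gamma$ dominating the graphs of $f$, $g$, and $g\circ f$, apply Yuan's arithmetic volume inequality (arithmetic Siu) to the pullbacks of $\ol\cH_\cY$ and $\ol\cH_\cZ$ to get a pseudo-effective/big class, intersect with the nef class $\widehat c_1(\pi_\cX^*\ol\cH_\cX)^d$, and use the projection formula so that the threshold constant simplifies to $(d+1)/\adeg\bigl(\widehat c_1(\ol\cH_\cY)^{d+1}\bigr)$. The only differences are cosmetic---you build $\Gamma$ as a (regular alteration of a) triple graph in $\cX\times_\bZ\cY\times_\bZ\cZ$ while the paper takes the normalization of the graph of the induced map $\Gamma_f\ratmap\Gamma_g$---and you arrive at the identical constant.
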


The above result relies heavily on Yuan's work \cite{yuan_volume} on the properties of the arithmetic volume (see also \cite{chen_differentiability}). From Yuan's arithmetic volume estimates, one deduces an effective way to obtain an arithmetically pseudo-effective line bundle. 
The above theorem yields the following consequence.

\begin{corx}
\label{corD}
The first arithmetic degree $\alpha_1(f)$ of $f$ is equal to the limit:
\begin{equation*}
\alpha_1(f) = \lim_{n \rightarrow \infty} \adeg_1(f^n)^{1/n},
\end{equation*}
it is a finite number and independent of the choices of the regular model $\cX$ of $X$, the birational model of $X$, or the arithmetically ample line bundle on $\cX$. 
\end{corx}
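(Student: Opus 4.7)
The plan is to deduce Corollary~\ref{corD} from Theorem~\ref{thmC} via Fekete's subadditive lemma (for the existence and finiteness of the limit) together with a symmetric comparison argument (for the independence assertions).

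\emph{Existence and finiteness.} First I would specialize Theorem~\ref{thmC} to $X=Y=Z$ with the common choice $(\cX,\ol\cH)$, and take the two rational maps there to be arbitrary iterates $f^n$ and $f^m$ of $f$. This produces a constant $C>0$, independent of $m,n$, with
\[
\adeg_1(f^{n+m}) \leqslant C \cdot \adeg_1(f^n) \cdot \adeg_1(f^m).
\]
Setting $b_n \coloneqq C \cdot \adeg_1(f^n)$ makes $(b_n)$ a genuinely submultiplicative sequence $b_{n+m} \leqslant b_n b_m$, and each $b_n$ is strictly positive because the arithmetic self-intersection of the arithmetically ample class $\ol\cH$ is positive (for instance via the arithmetic Khovanski--Teissier inequalities of Ikoma alluded to elsewhere in the paper). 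Fekete's lemma then gives $\lim b_n^{1/n} = \inf b_n^{1/n} \in (0,\infty)$, and since $C^{1/n} \to 1$ the limit $\lim \adeg_1(f^n)^{1/n}$ exists and equals $\alpha_1(f)$. Finiteness follows by iterating the submultiplicative bound: $\adeg_1(f^n) \leqslant C^{n-1}\adeg_1(f)^n$, so $\alpha_1(f) \leqslant \max(C,1)\cdot\adeg_1(f) < \infty$.

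\emph{Independence of choices.} I would handle all three independence statements in one symmetric comparison. Suppose we are given two admissible data $(X_i,\cX_i,\ol\cH_i)$ for $i=1,2$, where $X_i$ is smooth projective and birational to $X$, $\cX_i$ is a regular model of $X_i$, and $\ol\cH_i$ is arithmetically ample; fix a birational identification $\phi\colon X_1 \ratmap X_2$ (which is the identity whenever only the model or polarization changes). Writing $f_i$ for the induced self-map, $f_2^n = \phi \circ f_1^n \circ \phi^{-1}$. Two successive applications of Theorem~\ref{thmC}---first to $\phi \circ (f_1^n \circ \phi^{-1})$ with intermediate model $(\cX_2,\ol\cH_2)$, then to $f_1^n \circ \phi^{-1}$ with intermediate model $(\cX_1,\ol\cH_1)$---produce a constant $C_\phi$ depending only on the two fixed setups and on the finite quantities $\adeg_1(\phi),\adeg_1(\phi^{-1})$, and in particular independent of $n$, such that the first arithmetic degree of $f_2^n$ computed on $(\cX_2,\ol\cH_2)$ is bounded above by $C_\phi$ times the first arithmetic degree of $f_1^n$ computed on $(\cX_1,\ol\cH_1)$. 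Taking $n$-th roots and passing to the limit gives $\alpha_1(f_2) \leqslant \alpha_1(f_1)$; swapping the roles of the two setups and replacing $\phi$ by $\phi^{-1}$ gives the reverse inequality.

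The only delicate point is the bookkeeping in the independence argument: one must arrange the two invocations of Theorem~\ref{thmC} so that only the iterate $f_1^n$ varies, while the three ambient arithmetic varieties, their polarizations, and the flanking maps $\phi,\phi^{-1}$ all remain fixed, ensuring that the comparison constant $C_\phi$ is truly $n$-independent. Granting this, everything else (Fekete, iteration of the submultiplicative bound, and symmetry of the comparison) is routine.
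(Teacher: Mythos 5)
Your proposal is correct and fills in precisely the argument the paper leaves implicit when it states Corollary~\ref{corD} as a direct consequence of Theorem~\ref{thmC}: specialize the submultiplicative bound to the iterates of a single map, invoke Fekete's lemma for existence and finiteness of the limit, and run the two-step comparison through a birational map $\phi$ (using that the constant in Theorem~\ref{thmC} depends only on the fixed arithmetic models and polarizations, not on the maps) to get independence. Two small points of hygiene: the strict positivity of $\adeg_1(f^n)$ needed for Fekete is most cleanly obtained from Theorem~\ref{thmB} (if $\adeg_1(f^m)=0$ then submultiplicativity forces $\alpha_1(f^m)=0$, contradicting $\alpha_1(f^m)\geqslant\lambda_1(f^m)\geqslant 1$) rather than from ampleness of $\ol\cH$, since $\pi_{in}^*\ol\cH$ on the graph is only arithmetically nef; and in the independence step the intermediate arithmetic model in both applications of Theorem~\ref{thmC} should be $(\cX_1,\ol\cH_1)$, not $(\cX_2,\ol\cH_2)$ --- a slip in wording that does not affect the validity of the argument.
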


More precisely, Theorem~\ref{thmA} shows that the growth rate of the sequence $\adeg_1(f^n)$ is also a birational invariant.
In this particular case, the relationship between the arithmetic degree and the dynamical degree is  more constrained.

\begin{thmx}
\label{thmE}
Let $f$ be a dominant rational self-map of $X$. Then one has
\begin{equation*}
\alpha_1(f)=\lambda_1(f).
\end{equation*}
\end{thmx}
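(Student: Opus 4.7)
The strategy is to prove the two inequalities $\alpha_1(f)\geqslant \lambda_1(f)$ and $\alpha_1(f)\leqslant \lambda_1(f)$ separately. The lower bound is an immediate consequence of Theorem~\ref{thmB} with $k=1$: it yields $\alpha_1(f)\geqslant \max\{\lambda_1(f),\lambda_0(f)\}=\lambda_1(f)$, since $\lambda_0(f)=1$ and $\lambda_1(f)\geqslant 1$ for every dominant rational self-map.

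For the reverse inequality, the starting point is Corollary~\ref{corD}, which guarantees that $\alpha_1(f)=\lim_{n\to\infty}\adeg_1(f^n)^{1/n}$ as an honest limit, independent of the regular model and of the arithmetically ample line bundle $\ol\cH$. Combined with Fekete's lemma applied to the (up to a uniform constant) submultiplicative sequence $\adeg_1(f^n)$ provided by Theorem~\ref{thmC}, it suffices to produce an estimate of the shape
$$
\adeg_1(f^n)\;\leqslant\; P(n)\cdot \deg_1(f^n),
$$
where $P(n)$ grows at most sub-exponentially in $n$. Taking $n$-th roots and recalling that $\deg_1(f^n)^{1/n}\to \lambda_1(f)$ will then complete the argument.

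To obtain such an estimate, the plan is to expand $\adeg_1(f^n)=\adeg\bigl(\widehat c_1(\pi_2^*\ol\cH)\cdot \widehat c_1(\pi_1^*\ol\cH)^d\bigr)$ on the graph $\Gamma_{\ff^n}$ and separate the algebraic and archimedean contributions. The algebraic part recovers a fixed multiple of $\deg_1(f^n)$ via the projection formula. For the archimedean part, we invoke the arithmetic Bertini argument of Lemma~\ref{lem_expansion} together with Yuan's arithmetic volume inequality (as already used in the proof of Theorem~\ref{thmC}): these jointly produce a strictly small section of a suitable multiple of $\pi_1^*\ol\cH$ whose associated Green current, paired with the remaining algebraic factor $\widehat c_1(\pi_2^*\ol\cH)$, contributes at most polynomially in $n$.

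The principal obstacle is achieving uniformity in $n$. A na\"ive application of the arithmetic Bertini estimate on each $\Gamma_{\ff^n}$ produces constants growing exponentially in $n$, which would only yield $\alpha_1(f)\leqslant C\cdot\lambda_1(f)$ for some $C\geqslant 1$ rather than equality. The expected remedy is to build the small sections inductively, leveraging the submultiplicativity from Theorem~\ref{thmC} to propagate a single well-chosen section on $\Gamma_{\ff}$ through to all iterates, so that the archimedean defect accumulates only polynomially. This is the technical heart of the argument, and is where the effective arithmetic volume estimates must be used with care.
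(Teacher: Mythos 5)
Your lower bound argument (Theorem~\ref{thmB} with $k=1$) is correct and matches the paper. The problem is with the upper bound, where there is a genuine gap, and in fact the approach as sketched is circular.

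The intermediate inequality you aim for, $\adeg_1(f^n) \leqslant P(n)\cdot \deg_1(f^n)$ with $P$ sub-exponential, is essentially \emph{equivalent} to the conclusion $\alpha_1(f)=\lambda_1(f)$: since $\deg_1(f^n)^{1/n}\to\lambda_1(f)$ and $\adeg_1(f^n)^{1/n}\to\alpha_1(f)$ (Corollary~\ref{corD}), the ratio $\adeg_1(f^n)/\deg_1(f^n)$ is sub-exponential if and only if $\alpha_1(f)\leqslant\lambda_1(f)$. So the target estimate is not a stepping stone but a restatement of what must be proved, and your sketch gives no independent argument for it. In particular, the claimed mechanism --- that the archimedean (Green-current) contribution is ``at most polynomial in $n$'' --- is false: the archimedean term carries the height data of the cycle $Z_{1,\epsilon}$ pushed forward by $f^n$, and this height generally grows exponentially, at rate $\lambda_1(f)^n$ itself (consider a polarized endomorphism). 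The inductive propagation of small sections via Theorem~\ref{thmC} has no evident way to kill this exponential growth; it only controls the multiplicative constant across composites, which is already what gives you Corollary~\ref{corD}.

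What is actually missing is an \emph{independent} arithmetic input bounding the height growth of points under iteration. The paper's route is different: it applies Theorem~\ref{thm_upper_bound} (which is built on Lemma~\ref{lem_expansion} together with Xiao's generalization of Siu's inequality --- not Yuan's volume bound, which is used for Theorem~\ref{thmC}) to obtain, for each $p$, a $0$-cycle $Z=\sum a_i[x_i]$ with
\[
\adeg_1(f^p) \leqslant \sum_i a_i\, h_{\ol\cH}(f^p x_i) + C\,\deg_1(f^p),
\]
where $C$ is uniform in $p$. The crucial term is the height $h_{\ol\cH}(f^p x_i)$, and the step that closes the argument is Matsuzawa's theorem \cite[Theorem~1.4]{matsuzawa}, which bounds the growth of heights of rational points along $f$-orbits by $\lambda_1(f)$. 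Your sketch omits this ingredient entirely; without it (or an equivalent substitute) the upper bound does not follow. If you want to keep your framework, the fix is to replace the false ``polynomial archimedean defect'' claim by the genuine estimate from Theorem~\ref{thm_upper_bound} and then import Matsuzawa's height bound to control the extra point-height term.
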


This last result together with Theorem~\ref{thmA} implies that every arithmetic degree $\alpha_k(f)$ is finite, precisely:
\begin{equation*}
\alpha_k(f) \leqslant \lambda_1(f)^k.
\end{equation*}

Let us explain how one can understand the above theorem.
Since Theorem~\ref{thmB} proves that $\lambda_1(f) \leqslant \alpha_1(f)$, the proof of this last result amounts in proving the converse inequality $\alpha_1(f) \leqslant \lambda_1(f)$. 
Using an arithmetic Bertini argument (see Section~\ref{section_technical}) and a generalization of Siu's analytic estimates in higher codimension due to Xiao \cite{xiao}, we prove a more general statement (see Theorem \ref{thm_upper_bound}), namely that for all integer $n$, there exists an algebraic cycle $Z$ of dimension $k-1$ on $X$ such that
$$\adeg_k(f^n) \leqslant h_{\ol \cH}(f^n Z) + C \deg_k(f^n)$$
for a positive constant $C>0$ which is independent of $p$ and $Z$.
In the case where $k=1$, the degree of $f^n$ with respect to $\ol\cH$ is controlled by the height of a point and the algebraic degree of $f^n$.
We finally conclude using the fact that the height of the images of a rational point by $f^n$ is controlled by the first dynamical degree of $f$ (see \cite{KS16b,matsuzawa}).

\subsection*{Acknowledgements}

The first, fourth and fifth authors thank Yohsuke Matsuzawa and Joe Silverman for helpful discussions. The first author also thanks Charles Favre, Thomas Gauthier and Mattias Jonsson for their suggestions and remarks. Part of this work was done during the Simons Symposium: Algebraic, Complex and Arithmetic Dynamics in May 2019 at Schloss Elmau. We are grateful to the Simons foundation for its generous support.

\section{On arithmetic degrees of codimension \texorpdfstring{$k$}{k}}

\subsection{Proof of Theorem \ref{thmA}}

Fix a model $\cX$ of $X$ and an arithmetically ample line bundle $\ol \cH$ on $\cX$. We take $f \colon X \ratmap X$ a dominant rational self-map of $X$.
By abuse of notation, we also denote by $f$ the dominant rational map on the regular model $\cX$.
Let $\pi_{1n},\pi_{2n}$ be the projections from the normalization of the graph of $f^n$ in $\cX \times_\bZ \cX$ to the first and second component respectively.
By \cite[Theorem~2.9(1)]{ikoma} applied the arithmetic $\pi_{1n}^* \ol \cH$, $\pi_{2n}^* \ol \cH$, we have:
\[
\quad \ \adeg_k(f^n)^2 = \adeg \big( \widehat c_1(\pi_{2n}^*\ol\cH)^k \cdot \widehat c_1(\pi_{1n}^*\ol\cH)^{d+1-k} \big)^2 \geqslant 
\]
\[
\adeg \big( \widehat c_1(\pi_{2n}^*\ol\cH)^{k+1} \cdot \widehat c_1(\pi_{1n}^*\ol\cH)^{d-k} \big) \adeg \big( \widehat c_1(\pi_{2n}^*\ol\cH)^{k-1} \cdot \widehat c_1(\pi_{1n}^*\ol\cH)^{d+2-k} \big).
\]
We thus obtain that
\begin{equation*}
\adeg_k(f^n)^2 \geqslant \adeg_{k+1}(f^n) \adeg_{k-1}(f^n).
\end{equation*}
Taking the $n$-th root as $n$ tends to infinity yields that
\begin{equation*}
\alpha_k(f)^2 \geqslant \alpha_{k-1}(f) \alpha_{k+1}(f)
\end{equation*}
and the sequence $k \mapsto \alpha_k(f)$ is locally log-concave at each point.
It is thus globally log-concave, as required.

The second assertion immediately follows from the definition of $\alpha_k(f)$, with the roles of $\pi_1$ and $\pi_2$ reversed.
\qed

\subsection{A technical lemma}
\label{section_technical}

In this section, we prove a technical lemma which relates the two different $k$-th degrees of a rational self-map $f$ as well as the height of certain points on the $f$-orbit. To do so, let us introduce a few notations.

We fix a dominant rational map $f \colon X \ratmap X$ defined over $\bQ$.
Denote by $\pi_1, \pi_2$ the projections from a generically smooth birational model $\Gamma_f$ of the graph of $f$ in $\cX \times \cX$ onto the first and the second factor respectively. 
We fix an arithmetically ample line bundle $\ol \cH$ on $\cX$ and an integer $k \leqslant d$.
Take a sequence of sections $s_{1,\epsilon}, \ldots , s_{d, \epsilon}$ of an arithmetically ample line bundle such that $\adivi(s_{i,\epsilon}) = \ol \cH_\epsilon$ is arbitrarily close to $\pi_1^* \ol \cH$ and whose norm is uniformly bounded by $1$. We denote by $\omega_{1,\epsilon}$ the smooth $(1,1)$-forms associated to the class $\adivi(s_{i,\epsilon})$.
For an integer $1 \leqslant i \leqslant d+1-k$, denote by $Z_{i,\epsilon}$ the smooth subvarieties given by:
\begin{equation*}
Z_{i, \epsilon} = \divi(s_{i,\epsilon}) \cdot \ldots \cdot \divi(s_{d+1-k, \epsilon}).
\end{equation*}
Observe that the support of the cycles $Z_{i,\epsilon}$ forms an increasing sequence of closed subvarieties of $\Gamma_f$ where $Z_{d+1-k,\epsilon}= \divi(s_{d+1-k,\epsilon})$.
For the next proof, we will also set $Z_{d+2-k,\epsilon} \coloneqq \Gamma_f$.  
   
\begin{lemma} \label{lem_expansion} One has:
\begin{equation*}
\adeg_k(f) = \lim_{\epsilon \rightarrow 0} \adeg \big(\widehat c_1(\pi_2^* \ol \cH)^k \cdot Z_{1,\epsilon} \big) -  \sum_{j=2}^{d+2-k} \int_{Z_{j,\epsilon}(\CC)} \log(||s_{j-1,\epsilon}||) \pi_2^* \omega_X^k \wedge \omega_{1,\epsilon}^{j-2}.
\end{equation*}
\end{lemma}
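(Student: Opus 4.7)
The plan is to expand the iterated arithmetic intersection defining $\adeg_k(f)$ into an algebraic part plus Green-current contributions via iterated Poincar\'e--Lelong, working first with the approximating arithmetically ample class $\ol\cH_\epsilon$ and then passing to the limit $\epsilon \to 0$.

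For fixed $\epsilon > 0$, I would compute $\widehat\deg\big(\widehat c_1(\pi_2^*\ol\cH)^k \cdot \widehat c_1(\ol\cH_\epsilon)^{d+1-k}\big)$ explicitly by iteratively applying the arithmetic intersection formula
\begin{equation*}
\widehat\divi(s) \cdot (Z, g) = \big( \divi(s) \cdot Z,\ g \wedge \omega_{1,\epsilon} - \log||s||\cdot \delta_{Z(\CC)} \big)
\end{equation*}
(with the normalization used in the excerpt). Starting from the class $(\Gamma_f, 0)$ and successively intersecting with $\widehat\divi(s_{d+1-k,\epsilon}), \ldots, \widehat\divi(s_{1,\epsilon})$, the algebraic part descends through the flag $\Gamma_f = Z_{d+2-k,\epsilon} \supset Z_{d+1-k,\epsilon} \supset \cdots \supset Z_{1,\epsilon}$, while the Green current carried over from earlier steps wedges with $\omega_{1,\epsilon}$ at each subsequent stage. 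The Green contribution introduced at the stage corresponding to $s_{j-1,\epsilon}$ (for $j = 2, \ldots, d+2-k$) is proportional to $-\log||s_{j-1,\epsilon}||\,\delta_{Z_{j,\epsilon}(\CC)}$; after the remaining $j-2$ iterations this piece accumulates a factor $\omega_{1,\epsilon}^{j-2}$. Finally, pairing with $\widehat c_1(\pi_2^*\ol\cH)^k$ produces integration against $\pi_2^*\omega_X^k$, and applying $\widehat\deg$ yields, for each $\epsilon$,
\begin{equation*}
\widehat\deg\big(\widehat c_1(\pi_2^*\ol\cH)^k \cdot \widehat c_1(\ol\cH_\epsilon)^{d+1-k}\big) = \widehat\deg\big(\widehat c_1(\pi_2^*\ol\cH)^k \cdot Z_{1,\epsilon}\big) - \sum_{j=2}^{d+2-k}\int_{Z_{j,\epsilon}(\CC)}\log||s_{j-1,\epsilon}||\, \pi_2^*\omega_X^k\wedge \omega_{1,\epsilon}^{j-2}.
\end{equation*}

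Second, the convergence $\ol\cH_\epsilon \to \pi_1^*\ol\cH$ in the arithmetically ample cone, combined with continuity of the arithmetic intersection product on that cone (cf.~\cite{yuan_volume} and the arithmetic Hodge-index framework of Yuan--Zhang), gives
\begin{equation*}
\lim_{\epsilon\to 0}\widehat\deg\big(\widehat c_1(\pi_2^*\ol\cH)^k \cdot \widehat c_1(\ol\cH_\epsilon)^{d+1-k}\big) = \widehat\deg\big(\widehat c_1(\pi_2^*\ol\cH)^k \cdot \widehat c_1(\pi_1^*\ol\cH)^{d+1-k}\big) = \adeg_k(f),
\end{equation*}
and combining with the explicit formula above yields the lemma.

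The principal technical obstacle is the continuity step: one must justify passage to the limit in the arithmetic intersection uniformly in the approximating data $(s_{i,\epsilon}, \ol\cH_\epsilon)$, working in the correct topology on Green currents ($L^1_{\mathrm{loc}}$ or similar). A secondary technical point is ensuring integrability of $\log||s_{j-1,\epsilon}||$ against the smooth form $\pi_2^*\omega_X^k \wedge \omega_{1,\epsilon}^{j-2}$ restricted to $Z_{j,\epsilon}(\CC)$; since $\log||s||$ has only logarithmic singularities along $\divi(s)$ and the $s_{i,\epsilon}$ are chosen in sufficiently generic position so that the $Z_{j,\epsilon}$ are generically smooth and meet $\divi(s_{j-1,\epsilon})$ transversally at generic points, the standard integrability argument for Green currents applies.
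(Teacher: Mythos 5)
Your proposal matches the paper's proof: both iteratively apply the Bost--Gillet--Soul\'e formula (their Proposition~2.3.1(vi), Formula~2.3.8, which is precisely the arithmetic Poincar\'e--Lelong identity you invoke) to peel off the approximating arithmetic divisors $\adivi(s_{i,\epsilon})$ one at a time, producing at each stage an algebraic intersection against a smaller cycle $Z_{j,\epsilon}$ plus a Green-current integral term, and then pass to the limit $\epsilon \to 0$. Your write-up is somewhat more explicit about justifying the limit step (continuity of the arithmetic pairing as $\ol\cH_\epsilon \to \pi_1^*\ol\cH$), a point the paper treats implicitly, but the decomposition and the key ingredient are the same.
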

\begin{proof}
Let us write $\adeg_k(f)$ as follows:
\begin{equation*}
\adeg_k(f) = \adeg \big(\widehat c_1(\pi_2^* \ol \cH)^k \cdot \adivi(s_{1,\epsilon}) \cdot \ldots \cdot \adivi(s_{d+1-k,\epsilon}) \big). 
\end{equation*}
By \cite[Formula 2.3.8 of Proposition 2.3.1(vi)]{bost_gillet_soule} applied to the arithmetic divisor $\adivi(s_{d+1-k})$, we have:
\begin{align*}
\adeg_k(f) &= \lim_{\epsilon \rightarrow 0} \adeg \big(\widehat c_1(\pi_2^* \ol \cH)^k \cdot \adivi(s_{1,\epsilon}) \cdot \ldots \cdot \adivi(s_{d-k,\epsilon}) \cdot \divi(s_{d+1 - k ,\epsilon}) \big) \\
& \quad - \int_{\Gamma_f(\CC)} \log(|| s_{d+1-k,\epsilon}||) \pi_2^* \omega_X^k \wedge \omega_{1,\epsilon}^{d-k}\\
& = \lim_{\epsilon \rightarrow 0} \adeg \big(\widehat c_1(\pi_2^* \ol \cH)^k \cdot \adivi(s_{1,\epsilon}) \cdot \ldots \cdot \adivi(s_{d-k,\epsilon}) \cdot Z_{d+1-k,\epsilon} \big) \\
& \quad - \int_{\Gamma_f(\CC)} \log(|| s_{d+1-k,\epsilon}||) \pi_2^* \omega_X^k \wedge \omega_{1,\epsilon}^{d-k}.
\end{align*}
We apply inductively \cite[Formula 2.3.8 of Proposition 2.3.1(vi)]{bost_gillet_soule} as follows:
\[
\adeg \big( \widehat c_1(\pi_2^* \ol \cH)^k \cdot \widehat c_1(\ol \cH_\epsilon)^{d+1-k}\cdot [X] \big) = 
\]
\[
\adeg \big( \widehat c_1(\pi_2^* \ol \cH)^k \cdot \widehat c_1(\ol \cH_\epsilon)^{d-k} \cdot Z_{d+1-k,\epsilon} \big) - \int_{X(\CC)} \log(|| s_{d+1-k,\epsilon}||) \pi_2^* \omega_X^k \wedge \omega_{1,\epsilon}^{d-k},
\]
\[
\adeg \big( \widehat c_1(\pi_2^* \ol \cH)^k \cdot \widehat c_1(\ol \cH_\epsilon)^{d-k} \cdot Z_{d+1-k,\epsilon} \big) = 
\]
\[
\adeg \big( \widehat c_1(\pi_2^* \ol \cH)^k \cdot \widehat c_1(\ol \cH_\epsilon)^{d-k-1} \cdot Z_{d-k,\epsilon} \big) - \int_{Z_{d+1-k,\epsilon}(\CC)} \log(|| s_{d-k,\epsilon}||) \pi_2^* \omega_X^k \wedge \omega_{1,\epsilon}^{d-k-1},
\]
\[
\adeg \big( \widehat c_1(\pi_2^* \ol \cH)^k \cdot \widehat c_1(\ol \cH_\epsilon)^{d-k-1} \cdot Z_{d-k,\epsilon} \big) =
\]
\[
\adeg \big( \widehat c_1(\pi_2^* \ol \cH)^k \cdot \widehat c_1(\ol \cH_\epsilon)^{d-k-2} \cdot Z_{d-k-1,\epsilon} \big) - \int_{Z_{d-k,\epsilon}(\CC)} \log(|| s_{d-k-1,\epsilon}||) \pi_2^* \omega_X^k \wedge \omega_{1,\epsilon}^{d-k-2},
\]
\[\vdots\]
\[
\adeg \big( \widehat c_1(\pi_2^* \ol \cH)^k \cdot  \widehat c_1(\ol \cH_\epsilon)^{2} \cdot Z_{3,\epsilon} \big) =
\]
\[
\adeg \big( \widehat c_1(\pi_2^* \ol \cH)^k \cdot  \widehat c_1(\ol \cH_\epsilon) \cdot Z_{2,\epsilon} \big) -\int_{Z_{3,\epsilon}(\CC)} \log(|| s_{2,\epsilon}||) \pi_2^* \omega_X^k \wedge \omega_{1,\epsilon},
\]
\[
\adeg \big( \widehat c_1(\pi_2^* \ol \cH)^k \cdot  \widehat c_1(\ol \cH_\epsilon) \cdot Z_{2,\epsilon} \big) = \adeg \big( \widehat c_1(\pi_2^* \ol \cH)^k \cdot Z_{1,\epsilon} \big) -\int_{Z_{2,\epsilon}(\CC)} \log(|| s_{1,\epsilon}||) \pi_2^* \omega_X^k.
\]

From the above sequence of equalities, we deduce that
\begin{equation*}
\adeg_k(f) = \lim_{\epsilon \rightarrow 0} \adeg \big(\widehat c_1(\pi_2^* \ol \cH)^k \cdot Z_{1,\epsilon} \big) - \sum_{j=2}^{d+2-k} \int_{Z_{j,\epsilon}(\CC)} \log(||s_{j-1,\epsilon}||) \pi_2^* \omega_X^k \wedge \omega_{1,\epsilon}^{j-2} .
\end{equation*}
This finishes the proof of the lemma.
\end{proof}

\subsection{Proof of Theorem \ref{thmB}}

We shall use the following inequality due to Faltings \cite[Proposition~2.16]{faltings_diophantine}:

There exists a constant $C>0$ depending only on the arithmetically ample line bundle $\ol \cH$ on $\cX$ (depending on the supremum of the norm of a section of the corresponding line bundle at infinite places) such that:
\begin{equation*}
\label{eq_faltings}
h_{\ol \cH} (Z) \geqslant C ( H^k \cdot Z),
\end{equation*}
for any effective  cycle $Z \in Z^{d-k}(\cX)$. 
By Lemma \ref{lem_expansion}, for any integer $p $, we have:
\begin{equation*}
\adeg_k(f^p) = \lim_{\epsilon \rightarrow 0} h_{\ol \cH} (f^p {\pi_1}_*(Z_{1,\epsilon})) - \sum_{j=2}^{d+2-k} \int_{Z_{j,\epsilon}(\CC)} \log(||s_{j-1,\epsilon}||) \pi_2^* \omega_X^k \wedge \omega_{1,\epsilon}^{j-2},
\end{equation*}
where $Z_{1,\epsilon}$ and the $s_{i,\epsilon}$ are defined in Section~\ref{section_technical}. 
Observe that this cycle depends on each iterate $p$ since it a cycle on the graph of $f^p$.
Since the supremum of the norm of the sections $s_{j,\epsilon}$ is bounded by $1$, all the terms in the integral are non-negative.
We thus have
\begin{equation*}
\adeg_k(f^p) \geqslant h_{\ol \cH} (f^p {\pi_1}_* (Z_{1,\epsilon})).
\end{equation*}
Using the Faltings inequality, we thus deduce $\adeg_k(f^p) \geqslant C (H^{k-1} \cdot Z_{1,\epsilon})$. 
Taking the $p$-th root as $p$ tends to infinity finally gives:
\begin{equation*}
\alpha_k(f) \geqslant \lambda_{k-1}(f).
\end{equation*}
For the other inequality, we observe that the roles played by the line bundles $\pi_1^* \ol \cH$ and $\pi_2^* \ol \cH$ are symmetric and the technical lemma also proves that $\adeg_k(f^p) \geqslant C h_{\ol \cH} (f^{-p} Z)$ for an appropriate complete intersection cycle of dimension $k$ and we conclude that $\alpha_k(f) \geqslant \lambda_k(f)$. 
We have thus proven that $\alpha_k(f) \geqslant \max \{ \lambda_k(f), \lambda_{k-1}(f) \}$, as required.
\qed

We have conjectured that in fact the inequality of Theorem~\ref{thmB} is always an equality (see Conjecture~\ref{conj:product-formula}).
There are two cases in which we can prove this conjecture.

\begin{theorem}
\label{thm:surface}
If \(f \colon X \ratmap X\) is a birational self-map of a surface, then
\[
\alpha_k(f) = \max \{ \lambda_k(f),\lambda_{k-1}(f) \}
\]
for all \(1 \leqslant k \leqslant d\).  
\end{theorem}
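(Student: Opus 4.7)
The lower bound $\alpha_k(f) \geqslant \max\{\lambda_k(f), \lambda_{k-1}(f)\}$ for $1 \leqslant k \leqslant d = 2$ is already supplied by Theorem~\ref{thmB}, so the task reduces to establishing the reverse inequality in the two cases $k = 1$ and $k = 2$.

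The case $k = 1$ is immediate from Theorem~\ref{thmE}: since $\alpha_1(f) = \lambda_1(f)$ and $\lambda_0(f) = 1 \leqslant \lambda_1(f)$, we obtain
\[
\alpha_1(f) = \lambda_1(f) = \max\{\lambda_1(f), \lambda_0(f)\}.
\]

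For $k = 2 = d$, the plan is to reduce to the divisor case through the birational duality of Theorem~\ref{thmA}(2), which gives
\[
\alpha_2(f) = \alpha_{d + 1 - 2}(f^{-1}) = \alpha_1(f^{-1}) = \lambda_1(f^{-1}),
\]
the last equality being Theorem~\ref{thmE} applied to the dominant birational self-map $f^{-1}$. Invoking the classical duality $\lambda_k(f) = \lambda_{d-k}(f^{-1})$ for birational self-maps — which in dimension two and codimension one collapses to $\lambda_1(f) = \lambda_1(f^{-1})$ — and combining with $\lambda_2(f) = 1$ (the topological degree of a birational map), one concludes
\[
\alpha_2(f) = \lambda_1(f) = \max\{\lambda_2(f), \lambda_1(f)\}.
\]

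The argument is therefore a formal consequence of Theorems~\ref{thmA}, \ref{thmB}, and \ref{thmE}, together with the standard identity $\lambda_1(f) = \lambda_1(f^{-1})$ for birational surface maps (which follows from passing to a smooth common resolution of $f$ and $f^{-1}$ and applying the projection formula on intersection numbers). I do not anticipate any genuine obstacle: the substantive content is carried by Theorem~\ref{thmE}, and the specialization $\dim X = 2$ restricts $k$ to $\{1, 2\}$ so that the birational duality of Theorem~\ref{thmA}(2) is enough to dispatch the top-codimension case by reducing it to the divisor case already handled.
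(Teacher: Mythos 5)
Your proof is correct and follows essentially the same route as the paper: the case $k=1$ is Theorem~\ref{thmE} directly, and $k=2$ is dispatched by the chain $\alpha_2(f) = \alpha_1(f^{-1}) = \lambda_1(f^{-1}) = \lambda_1(f)$ via Theorem~\ref{thmA}(2) and Theorem~\ref{thmE}, combined with $\lambda_2(f)=1$ for a birational map. The only cosmetic difference is that you first invoke Theorem~\ref{thmB} for the lower bound, which turns out to be redundant since the equalities are established directly.
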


\begin{proof}
It is always true that \(\alpha_0(f) = 1\), while \(\alpha_1(f) = \lambda_1(f)\) for any map according to Theorem~\ref{thmE}.  We have
\[
\alpha_2(f) = \alpha_1(f^{-1}) = \lambda_1(f^{-1}) = \lambda_1(f),
\]
and since \(\lambda_2(f) = 1\) as \(f\) is birational, we have \(\alpha_2(f) = \max \{ \lambda_1(f),\lambda_2(f) \} \) by definition.  
\end{proof}

Recall that a surjective self-morphism $f \colon X \to X$ is polarized, if there exists an ample line bundle $L$ on $X$ such that $f^*L \simeq L^{q}$ for some integer $q>1$.

\begin{theorem}
\label{thm:polarized}
If \(f \colon X \to X\) is a polarized endomorphism with respect to the ample line bundle $H=\cH_\bQ$ on $X$, then
\[
\alpha_k(f) = \max \{ \lambda_k(f),\lambda_{k-1}(f) \}
\]
for all \(1 \leqslant k \leqslant d\). 
\end{theorem}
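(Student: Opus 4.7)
Since $f$ is polarized with $f^*H \simeq H^{\otimes q}$ in $\Pic(X)$, we have $(f^n)^*H \simeq H^{\otimes q^n}$ for every $n \geqslant 1$, so that
\[
\deg_k(f^n) = ((f^n)^*H^k \cdot H^{d-k}) = q^{nk}(H^d),
\]
which yields $\lambda_k(f) = q^k$ for all $0 \leqslant k \leqslant d$. In particular, $\max\{\lambda_k(f), \lambda_{k-1}(f)\} = q^k$ for $1 \leqslant k \leqslant d$, and Theorem~\ref{thmB} already gives the lower bound $\alpha_k(f) \geqslant q^k$. The whole task is therefore reduced to the matching upper bound $\alpha_k(f) \leqslant q^k$.

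The plan is to establish this upper bound by induction on $k$, using only the three main theorems already proved. For the base cases, Theorem~\ref{thmE} gives $\alpha_1(f) = \lambda_1(f) = q$, and $\alpha_0(f) = 1$ is immediate: since $f$ is a morphism, no resolution is needed and $\adeg_0(f^n) = \adeg(\widehat c_1(\ol\cH)^{d+1})$ is a positive constant independent of $n$. For the inductive step, suppose $\alpha_k(f) \leqslant q^k$. The log-concavity from Theorem~\ref{thmA}(1), rearranged as $\alpha_{k+1}(f) \leqslant \alpha_k(f)^2/\alpha_{k-1}(f)$, combines with the lower bound $\alpha_{k-1}(f) \geqslant q^{k-1}$ (from Theorem~\ref{thmB} when $k \geqslant 2$, and from $\alpha_0 = 1$ when $k = 1$) to give
\[
\alpha_{k+1}(f) \;\leqslant\; \frac{\alpha_k(f)^2}{\alpha_{k-1}(f)} \;\leqslant\; \frac{q^{2k}}{q^{k-1}} \;=\; q^{k+1},
\]
which, paired with the lower bound $\alpha_{k+1}(f) \geqslant q^{k+1}$ coming again from Theorem~\ref{thmB}, closes the induction and propagates the equality $\alpha_k(f) = q^k$ throughout the range $1 \leqslant k \leqslant d$.

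Since every ingredient is in place, no serious obstacle is anticipated: the argument is essentially a bookkeeping exercise that interlocks the upper-bound side of log-concavity with the lower bounds of Theorem~\ref{thmB}, seeded by the nontrivial equality $\alpha_1 = \lambda_1$ of Theorem~\ref{thmE}. An alternative, more hands-on route would be to Tate-average $\ol\cH$ against $q^{-1}f^*$ to produce a semipositive adelic metric $\ol\cH_{\mathrm{can}}$ satisfying $f^*\ol\cH_{\mathrm{can}} = q\,\ol\cH_{\mathrm{can}}$, write $\ol\cH = \ol\cH_{\mathrm{can}} + \ol M$ for a uniformly bounded purely-metric perturbation $\ol M$, and expand $(f^n)^*\ol\cH = q^n\ol\cH_{\mathrm{can}} + (f^n)^*\ol M$ directly in the arithmetic intersection to extract $\adeg_k(f^n) = q^{nk}\adeg(\ol\cH_{\mathrm{can}}^{d+1}) + O(q^{n(k-1)})$. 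However, this requires working carefully with Zhang-style adelic intersection numbers against merely continuous metrics and handling the vanishing of $\adeg(\ol\cH_{\mathrm{can}}^{d+1})$, so the inductive route above is preferable as it remains entirely within the framework already established in the paper.
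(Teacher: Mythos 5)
Your proof is correct and takes a genuinely different route from the paper's. The paper fixes an isomorphism $\phi \colon \cH^{\otimes q} \simeq f^*\cH$ on $\cX$ and applies Zhang's Tate-limiting construction to produce a metric $\|\cdot\|_0$ with $f^*\ol\cH_0 = q\,\ol\cH_0$ as arithmetic classes, then reads off $\adeg_k(f^n) = q^{kn}\,\adeg\big(\widehat c_1(\ol\cH_0)^{d+1}\big)$ and concludes $\alpha_k(f) = q^k$ at once. Your argument instead stays entirely within the interface already established: $\lambda_k(f) = q^k$ from the polarization, $\alpha_1(f) = q$ from Theorem~\ref{thmE}, $\alpha_0(f) = 1$ from the projection formula, and then a squeeze of the log-concavity in Theorem~\ref{thmA}(1) against the lower bound of Theorem~\ref{thmB} propagates $\alpha_k(f) = q^k$ up the range $1 \leqslant k \leqslant d$. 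What your route buys --- and you yourself flag this in the ``alternative'' paragraph --- is that it sidesteps the delicate points in the canonical-metric argument: the Tate limit metric $\|\cdot\|_0$ is in general only a continuous adelic semipositive metric rather than an arithmetically ample one as the paper asserts; the normalization $\adeg\big(\widehat c_1(\ol\cH_0)^{d+1}\big)$ in fact vanishes (preperiodic points are Zariski dense and have canonical height zero, so Zhang's successive-minima bound forces the arithmetic self-intersection to be zero); and replacing the reference line bundle $\ol\cH$ by $\ol\cH_0$ in the definition of $\alpha_k$ is only justified by Corollary~\ref{corD} for $k = 1$. Your inductive bookkeeping is immune to all of this, at the cost of being less explicit about \emph{why} the polarized case is so rigid --- it relies on Theorem~\ref{thmE}, whose proof is itself nontrivial, whereas the canonical-metric picture, once its technical issues are repaired, explains the exact value $q^{kn}$ at the level of intersection numbers before any limit is taken.
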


\begin{proof}
We fix an isomorphism $\phi \colon \cH^q \simeq f^*\cH$ on the arithmetic model $\cX$ of $X$,
where $\ol\cH = (\cH, || \cdot ||)$ is a fixed arithmetically ample line bundle on $\cX$.
By \cite[Theorem 2.2]{zhang_small_points}, there exists a unique metric $\| \cdot \|_0$ on $\cH$ satisfying:
\begin{equation*}
|| \cdot ||_0 = (\phi^* f^* || \cdot ||_0)^{1/q}.
\end{equation*}
This metric is obtained from the initial metric $|| \cdot ||$ on $\cH$ using Tate's limiting argument. As the initial $\ol \cH$ is arithmetically ample, so is the new arithmetic line bundle $\ol \cH_0 \coloneqq (\cH, || \cdot||_0)$.

Let us compute the $k$-th degree of $f^n$ with respect to the choice of the arithmetically ample line bundle $\ol \cH_0$:
\begin{equation*}
\adeg_k(f^n) = \adeg \big( \widehat{c}_1((f^n)^* \ol \cH_0)^k \cdot \widehat{c}_1(\ol \cH_0)^{d+1-k} \big) = q^{kn} \adeg \big( \widehat{c}_1(\ol \cH_0)^{d+1} \big).
\end{equation*}
This proves that $\alpha_k(f) = q^k$.
On the other hand, $\lambda_{k}(f) = q^{k}$ for any $k$.
We hence conclude that
\begin{equation*}
\alpha_k(f) = \max\{\lambda_k(f), \lambda_{k-1}(f)\},
\end{equation*}
as required.
\end{proof}

\subsection{Upper bound for the arithmetic degree}
We now give an upper bound for the $k$-th degree in the following result.
To do so, we will need the following generalization of Siu's analytic estimates in higher codimension due to Xiao \cite[Remark 3.1]{xiao}.

\begin{theorem}[{cf.~Xiao \cite{xiao}}] Take $\omega_1, \omega_2$ two big nef $(1,1)$ classes on a smooth compact K\"ahler manifold $X$ of dimension $d$. 
Then for any integer $k \leqslant n$,  the $(k,k)$-form
\[
\binom{d}{k} \dfrac{\int_X \omega_1^k \wedge \omega_2^{d-k}}{\int_X \omega_2^d} \omega_2^k - \omega_1^k
\]
is represented by a closed positive current.
\end{theorem}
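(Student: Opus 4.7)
The plan is to prove this generalization of Siu's inequality by reducing to the classical case $k=1$. First, by openness of the big nef cone together with Demailly regularization, one reduces to the situation where $\omega_1$ and $\omega_2$ are Kähler classes admitting smooth strictly positive representatives, and recovers the general statement at the end by a limiting argument that uses continuity of the intersection numbers and weak-* compactness of the space of normalized positive currents of bounded mass.

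The base case $k=1$ is the transcendental Demailly-Siu inequality: the class $d \frac{\int_X \omega_1 \wedge \omega_2^{d-1}}{\int_X \omega_2^d} \omega_2 - \omega_1$ contains a closed positive $(1,1)$-current. This is classical and is established via the mass concentration technique for the complex Monge-Ampère operator together with regularization of quasi-plurisubharmonic functions on a Kähler background.

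For general $k$, the strategy is to iterate a relative version of Siu's inequality. Specifically, for any smooth closed positive $(j,j)$-form $\eta$, Siu's inequality applied relatively to $\eta$ produces a closed positive $(j+1,j+1)$-current in the class $\left((d-j)\frac{\int_X \omega_1 \wedge \omega_2^{d-j-1} \wedge \eta}{\int_X \omega_2^{d-j} \wedge \eta}\omega_2 - \omega_1\right) \wedge \eta$. Choosing the background forms $\eta$ from the family of mixed wedge products $\omega_1^i \wedge \omega_2^{j-i}$ and combining the resulting positivity statements, one builds a closed positive $(k,k)$-current in the class $c_k \omega_2^k - \omega_1^k$, where $c_k = \binom{d}{k} \int_X \omega_1^k \wedge \omega_2^{d-k} / \int_X \omega_2^d$.

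The main obstacle lies in the combinatorial bookkeeping of constants through the iteration. Indeed, a naive iteration of the case $k=1$ (replacing each $\omega_1$ factor by $c_1 \omega_2$ in turn) only yields the weaker bound $c_1^k \omega_2^k - \omega_1^k \geq 0$ with $c_1 = d \int_X \omega_1 \wedge \omega_2^{d-1}/ \int_X \omega_2^d$; and the Khovanskii-Teissier inequalities show that $c_1^k$ may exceed the target constant $c_k$ by a multiplicative factor as large as $k!$. Obtaining the sharp constant $c_k$ requires carefully selecting the background form $\eta$ at each inductive step and exploiting the log-concavity of the sequence $j \mapsto \int_X \omega_1^j \wedge \omega_2^{d-j}$ so that the constants telescope exactly to $\binom{d}{k} \int_X \omega_1^k \wedge \omega_2^{d-k} / \int_X \omega_2^d$. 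A final regularization argument then extends the statement from the Kähler setting to arbitrary big nef classes.
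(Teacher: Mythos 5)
Your proposal is an attempt to reprove a result that the paper does not itself prove: this theorem is quoted verbatim from Xiao (Remark~3.1 of \cite{xiao}), and the authors give no argument for it. So the relevant question is whether your sketch actually constitutes a proof, and I do not think it does, for two reasons.

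First, the ``relative Siu's inequality'' you invoke --- that for any smooth closed positive $(j,j)$-form $\eta$ the class $\left((d-j)\frac{\int_X \omega_1\wedge\omega_2^{d-j-1}\wedge\eta}{\int_X\omega_2^{d-j}\wedge\eta}\,\omega_2 - \omega_1\right)\wedge\eta$ contains a closed positive current --- is not a classical statement and you do not prove it. The usual Siu inequality is a top-degree assertion about $(1,1)$-classes on the whole manifold; a version ``relative to $\eta$'' would need a separate argument (restriction to a generic complete intersection when $\eta$ is realized by a subvariety, or a Monge--Amp\`ere argument twisted by $\eta$), and the constant $(d-j)$ would have to be justified. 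This is a genuine missing lemma, not a routine reduction.

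Second, and more seriously, the claim that ``carefully selecting the background form $\eta$ ... so that the constants telescope exactly'' yields the sharp constant $\binom{d}{k}$ is asserted but never carried out, and in fact it appears to be false for the family $\eta = \omega_1^i\wedge\omega_2^{j-i}$ that you propose. Writing $s_j = \int_X \omega_1^j\wedge\omega_2^{d-j}$, one always needs to peel off $\omega_1$-factors one at a time, and each application of your relative inequality with a background form of bidegree $(k-1,k-1)$ contributes a factor $(d-k+1)$, giving the bound
\[
\omega_1^k \leqslant (d-k+1)^k\,\frac{s_k}{s_0}\,\omega_2^k,
\]
after telescoping the $s$-ratios. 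Similarly, inducting on $k$ by first using the $(k-1)$-case and then one relative Siu gives $\omega_1^k \leqslant k\binom{d}{k}\frac{s_1 s_{k-1}}{s_0^2}\omega_2^k$, which by Khovanskii--Teissier log-concavity satisfies $\frac{s_1 s_{k-1}}{s_0^2}\geqslant \frac{s_k}{s_0}$, i.e.\ the inequality goes the \emph{wrong} way and one again overshoots the target by at least a factor of $k$. In short, iterating any Siu-type one-step inequality loses a multiplicative constant at each step that log-concavity cannot recover; the sharp binomial coefficient does not come out of such a scheme. Xiao's actual argument does not iterate the $k=1$ case: it solves a degenerate complex Monge--Amp\`ere equation and uses a mass-concentration estimate directly in bidegree $(k,k)$, which is how the constant $\binom{d}{k}$ arises. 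Your reduction to the K\"ahler case and the final regularization step are fine, but the core of the argument is missing.
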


We now state our next result, which gives an upper bound of the $k$-th degree of $f^p$. 

\begin{theorem}
\label{thm_upper_bound}
Let $f \colon X \dashrightarrow X$ be a dominant rational map on a projective variety defined over $\bQ$. 
There exists a constant $C>0$ such that for any integers $k\leqslant d+1$, for any integer $p$, there exists a cycle $Z$ of dimension $k-1$   whose support is not contained in  the indeterminacy locus of $f^p$ such that:
\begin{equation*}
\adeg_k(f^p) \leqslant h_{\ol \cH}(f^p Z) + C \deg_k(f^p).
\end{equation*}
\end{theorem}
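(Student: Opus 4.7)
My plan is to apply Lemma \ref{lem_expansion} to $f^p$, identify its ``finite'' contribution as $h_{\ol\cH}(f^p Z)$ for a well-chosen cycle $Z$ of dimension $k-1$, and then bound the archimedean remainder by a constant multiple of $\deg_k(f^p)$ via Xiao's higher-codimension Siu inequality.

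On the graph $\Gamma_{f^p}$ with projections $\pi_1,\pi_2$, I would first choose the approximating sections $s_{i,\epsilon}$ of Section \ref{section_technical} as $\pi_1$-pullbacks $s_{i,\epsilon}=\pi_1^*t_{i,\epsilon}$ of sections $t_{i,\epsilon}$ on the base model $\cX$. This is legitimate because $\pi_1^*\ol\cH$ descends to $\ol\cH$ via $\pi_1$, and an arithmetic Bertini argument lets us arrange that the generic fiber $Z$ of $\cZ_{1,\epsilon}\coloneqq\divi(t_{1,\epsilon})\cdot\ldots\cdot\divi(t_{d+1-k,\epsilon})$ has support not contained in the indeterminacy locus of $f^p$. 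With this choice, $Z_{1,\epsilon}=\pi_1^*\cZ_{1,\epsilon}$, and by the arithmetic projection formula,
\[
\adeg\bigl(\widehat c_1(\pi_2^*\ol\cH)^k\cdot Z_{1,\epsilon}\bigr) = \adeg\bigl(\widehat c_1(\ol\cH)^k\cdot (f^p)_*\cZ_{1,\epsilon}\bigr) = h_{\ol\cH}(f^p Z),
\]
identifying the finite part of the expansion with the desired height.

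For the archimedean remainders $B_{j,\epsilon}=\int_{Z_{j,\epsilon}(\CC)}\log\|s_{j-1,\epsilon}\|\,\pi_2^*\omega_X^k\wedge\omega_{1,\epsilon}^{j-2}$ (which satisfy $-B_{j,\epsilon}\geqslant 0$ since $\|s_{j-1,\epsilon}\|\leqslant 1$), I would apply Xiao's theorem on (a resolution of) $\Gamma_{f^p}(\CC)$ to big nef perturbations of $\pi_1^*\omega_X$ and $\pi_2^*\omega_X$. Taking the perturbation to zero yields a closed positive $(k,k)$-current $T_p$ representing
\[
\binom{d}{k}\frac{\deg_k(f^p)}{H^d}\pi_1^*\omega_X^k - \pi_2^*\omega_X^k.
\]
Pairing against the non-negative current $(-\log\|s_{j-1,\epsilon}\|)[Z_{j,\epsilon}(\CC)]\wedge\omega_{1,\epsilon}^{j-2}$ and discarding the non-negative $T_p$-contribution gives
\[
-B_{j,\epsilon}\leqslant \binom{d}{k}\frac{\deg_k(f^p)}{H^d}\int_{Z_{j,\epsilon}(\CC)}(-\log\|s_{j-1,\epsilon}\|)\,\pi_1^*\omega_X^k\wedge\omega_{1,\epsilon}^{j-2}.
\]
Because the integrand is now entirely a $\pi_1$-pullback, the projection formula rewrites the right-hand integral as an archimedean quantity on $\cZ_{j,\epsilon}(\CC)\subset\cX(\CC)$ depending only on the fixed data $(\cX,\ol\cH,t_{j,\epsilon})$ — in particular uniformly bounded in $p$ by a constant $C_j'$. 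Summing over $j$ and combining with the finite-part identification yields the claim with $C=\binom{d}{k}H^{-d}\sum_j C_j'$.

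The most delicate points I anticipate are: (i) rigorously justifying the current-theoretic pairings despite the singularities of $-\log\|s_{j-1,\epsilon}\|$ along $\divi(s_{j-1,\epsilon})$, which should succumb to standard Demailly-type regularization of Green functions; (ii) propagating the double limit (Xiao perturbation $\eta\to 0$ followed by $\epsilon\to 0$) while preserving uniformity in $p$; and (iii) the possible need to first desingularize $\Gamma_{f^p}$ to invoke Xiao's inequality in its Kähler form. The crucial feature making the constant independent of $p$ is that after the pullback trick in Step~1, all the archimedean data controlling the remainder lives on $\cX(\CC)$ rather than on $\Gamma_{f^p}(\CC)$.
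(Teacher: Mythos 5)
Your proof follows essentially the same strategy as the paper's own argument: expand $\adeg_k(f^p)$ via Lemma~\ref{lem_expansion}, identify the finite term $\adeg\big(\widehat c_1(\pi_2^*\ol\cH)^k\cdot Z_{1,\epsilon}\big)$ with $h_{\ol\cH}(f^p Z)$ for the Bertini-generic complete intersection $Z={\pi_1}_*Z_{1,\epsilon}$, and bound each archimedean remainder by a $p$-independent multiple of $\deg_k(f^p)$ using Xiao's higher-codimension Siu-type inequality. The only real deviation is where Xiao is invoked: the paper applies it on each subvariety $Z_{j,\epsilon}(\CC)$ of dimension $k+j-2$ (giving the $j$-dependent coefficient $\binom{j+k-2}{k}$ and implicitly requiring $\pi_1^*\omega_X$, $\pi_2^*\omega_X$ to remain big and nef after restriction), while you apply it once on the ambient graph $\Gamma_{f^p}(\CC)$ with the uniform $\binom{d}{k}$; both yield the same conclusion. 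Your explicit choice of the approximating sections as $\pi_1$-pullbacks is a useful clarification that makes the $p$-independence of the constant transparent, which the paper only asserts via the bound $C\avol(\ol\cH)/\vol(H)$. The delicacies you flag — justifying the pairing of the Xiao current against the singular weight $-\log\|s_{j-1,\epsilon}\|$, and passing to a K\"ahler desingularization of the graph — are genuine and are likewise glossed over in the paper's proof, so acknowledging them is to your credit rather than a gap.
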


\begin{proof}
We adopt the same notation of Section~\ref{section_technical}. 
Fix an integer $p$, we consider a generically smooth birational model $\Gamma$ of the graph of $f^p$.
By Lemma \ref{lem_expansion}, we have:
\begin{equation*}
\adeg_k(f) = \lim_{\epsilon \rightarrow 0} \adeg\big( \widehat c_1(\pi_2^* \ol \cH)^k \cdot Z_{1,\epsilon} \big) - \sum_{j=2}^{d+2-k} \int_{Z_{j,\epsilon}(\CC)} \log(||s_{j-1,\epsilon}||) \pi_2^* \omega_X^k \wedge \omega_{1,\epsilon}^{j-2}.
\end{equation*}
For each integer $2 \leqslant j \leqslant d+2-k $, Xiao's inequality implies that the $(k,k)$-form
\begin{equation*}
\binom{j+k-2}{k} \dfrac{\int_{Z_{j,\epsilon}(\CC)} \pi_2^* \omega_X^k \wedge \pi_1^* \omega_X^{j-2}}{\int_{Z_{j,\epsilon}(\CC)} \pi_1^* \omega_X^{k+j-2}} \pi_1^* \omega_X^k - \pi_2^*\omega_X^k
\end{equation*}
is represented by a closed positive current on $Z_{j,\epsilon}(\CC)$.
Since $-\log(||s_{j-1,\epsilon}||) \omega_{1,\epsilon}^{j-2}$ is a positive $(j-2,j-2)$-form on $Z_{j,\epsilon}(\CC)$, we deduce that
\begin{align*}
&-\int_{Z_{j,\epsilon}(\CC)} \log(||s_{j-1,\epsilon}||) \pi_2^* \omega_X^k \wedge \omega_{1,\epsilon}^{j-2} \\
\leqslant & -C \dfrac{\int_{Z_{j,\epsilon}(\CC)} \pi_2^* \omega_X^k \wedge \pi_1^* \omega_X^{j-2} }{ \int_{Z_{j,\epsilon}(\CC)} \pi_1^* \omega_X^{k+j-2}} \int_{Z_{j,\epsilon}(\CC)} \log(||s_{j-1,\epsilon}||)\pi_1^* \omega_X^k \wedge \pi_1^* \omega_{1,\epsilon}^{j-2},
\end{align*}
where $C = \dfrac{(j+k-2)!}{k! (j-2)!}$.

Observe that the term on the right hand side of the above inequality is bounded by 
$ C' \deg_k(f^p) $ as $\epsilon $ goes to zero where $C' $  is a positive constant bounded above by $C \avol(\ol \cH)/ \vol(H)$. 
We thus obtain:
\begin{equation*}
\adeg_k(f^p) \leqslant h_{\ol \cH}(f^p {\pi_1}_*(Z_{1,\epsilon})) + C' \deg_k(f^p),
\end{equation*}
as required.
\end{proof}

\section{On the first arithmetic degree}

\subsection{Proof of Theorem \ref{thmC}}

Our proof follows closely the recent algebraic approach for the existence of dynamical degrees for dominant rational self-maps due to Dang \cite{Dang}.

We shall use the following inequality on arithmetic divisors  due to Yuan \cite{yuan_volume}.

\begin{theorem}[{\cite[Theorem B]{yuan_volume}}]
Let $\ol\cL$, $\ol\cM$ and $\ol\cE$ be three Hermitian line bundles over an arithmetic variety $\cX$ of relative dimension $d$. Assume that $\ol\cL$ and $\ol\cM$ are arithmetically ample. Then
\[
\avol( \ol\cL + \ol\cM)^{1/(d+1)} \geq \avol(\ol\cL)^{1/(d+1)} + \avol(\ol\cM)^{1/(d+1)}. 
\]
\end{theorem}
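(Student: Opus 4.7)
The plan is to deduce this arithmetic Brunn--Minkowski inequality from the arithmetic Khovanskii--Teissier inequality, which is precisely Ikoma's Theorem~2.9 already cited in the paper (and used to prove Theorem~\ref{thmA}), combined with the arithmetic Hilbert--Samuel theorem of Gillet--Soul\'e and Zhang. The structure mirrors the classical derivation of the geometric Brunn--Minkowski inequality $\vol(L+M)^{1/d} \geqslant \vol(L)^{1/d} + \vol(M)^{1/d}$ for nef line bundles, where one extracts the inequality from log-concavity of the Khovanskii--Teissier sequence.

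First I would invoke arithmetic Hilbert--Samuel to identify the arithmetic volume of any arithmetically ample Hermitian line bundle with its top arithmetic self-intersection, so that
\[
\avol(\ol\cL) = \adeg\bigl(\widehat c_1(\ol\cL)^{d+1}\bigr), \quad \avol(\ol\cM) = \adeg\bigl(\widehat c_1(\ol\cM)^{d+1}\bigr),
\]
and similarly for $\ol\cL + \ol\cM$ (which is arithmetically ample as a sum of arithmetically ample bundles). Define the mixed intersection numbers
\[
a_i \coloneqq \adeg\bigl(\widehat c_1(\ol\cL)^{i} \cdot \widehat c_1(\ol\cM)^{d+1-i}\bigr), \qquad 0 \leqslant i \leqslant d+1,
\]
so that $a_0 = \avol(\ol\cM)$ and $a_{d+1} = \avol(\ol\cL)$. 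Multilinearity of the arithmetic intersection pairing expands the top self-intersection of the sum as a binomial sum:
\[
\avol(\ol\cL + \ol\cM) = \sum_{i=0}^{d+1} \binom{d+1}{i} a_i.
\]
Apply Ikoma's arithmetic Khovanskii--Teissier inequality to the pair $(\ol\cL,\ol\cM)$ to obtain $a_i^2 \geqslant a_{i-1} a_{i+1}$ for $1 \leqslant i \leqslant d$, so that $i \mapsto \log a_i$ is concave. Linear interpolation between $i=0$ and $i=d+1$ then yields
\[
a_i \geqslant a_0^{1 - i/(d+1)}\, a_{d+1}^{i/(d+1)}.
\]
Substituting into the binomial sum and recognising the result as a binomial expansion gives
\[
\avol(\ol\cL + \ol\cM) \geqslant \sum_{i=0}^{d+1} \binom{d+1}{i} a_0^{1 - i/(d+1)} a_{d+1}^{i/(d+1)} = \bigl(a_0^{1/(d+1)} + a_{d+1}^{1/(d+1)}\bigr)^{d+1},
\]
and taking $(d+1)$-th roots produces the stated inequality.

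The main obstacle lies in the two inputs rather than in the combinatorial deduction. The arithmetic Hilbert--Samuel identity $\avol(\ol\cL) = \adeg(\widehat c_1(\ol\cL)^{d+1})$ is a nontrivial theorem requiring sup-norm estimates for global sections together with the asymptotic growth of $\widehat h^0$. More substantially, the arithmetic Khovanskii--Teissier inequality $a_i^2 \geqslant a_{i-1} a_{i+1}$ is a reformulation of the arithmetic Hodge index theorem of Yuan--Zhang, which is the deepest ingredient; verifying that the hypotheses of Ikoma's formulation are met in the present setting (two arithmetically ample Hermitian line bundles on an arithmetic variety) is essentially immediate but should be checked carefully. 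A minor point is to confirm that each $a_i$ is strictly positive (so the interpolation is valid), which follows from arithmetic ampleness of both $\ol\cL$ and $\ol\cM$. Once these ingredients are in place, the remainder of the argument is purely formal.
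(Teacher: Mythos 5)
The paper does not prove this statement: it is imported verbatim from Yuan's work (the unused $\ol\cE$ in the hypotheses is an artifact of the quotation), so there is no internal proof to compare against. Your argument is nevertheless a correct derivation of the stated (arithmetically ample) case, but by a genuinely different route from Yuan's. Yuan proves Theorem B by constructing arithmetic Okounkov bodies and invoking the classical Brunn--Minkowski inequality for convex bodies; that approach has the advantage of covering big, not necessarily ample, Hermitian line bundles, where $\avol$ is \emph{not} computed by a top self-intersection. Your route instead uses arithmetic Hilbert--Samuel to replace $\avol$ by $\adeg(\widehat c_1(\cdot)^{d+1})$ --- legitimate here precisely because $\ol\cL$, $\ol\cM$, and hence $\ol\cL+\ol\cM$ are arithmetically ample --- then expands by multilinearity and concludes from log-concavity of the mixed numbers $a_i$, i.e.\ Ikoma's arithmetic Khovanskii--Teissier inequality, which rests on the Yuan--Zhang arithmetic Hodge index theorem. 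The combinatorial endgame (interpolation $a_i \geqslant a_0^{1-i/(d+1)}a_{d+1}^{i/(d+1)}$ followed by the binomial theorem) is standard and correct, and the strict positivity of the $a_i$ needed for the interpolation does hold for arithmetically ample bundles, as you note. Two caveats: the argument is anachronistic, since the Hodge index theorem and Ikoma's inequalities postdate Yuan's Theorem B, so one should confirm there is no hidden circularity (there is none in the published proofs --- the Yuan--Zhang Hodge index theorem relies on Yuan's separate Siu-type bigness estimate, proved via holomorphic Morse inequalities, not on the log-concavity of $\avol$); and your proof yields only the ample case, which matches the statement as quoted but is strictly weaker than Yuan's theorem for big line bundles.
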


An important consequence of the above fact (see \cite[Remark,  p.~1459]{yuan_volume}) is that the class $\ol\cL - \ol\cM$ is arithmetically big if
\[
\adeg \big( \widehat c_1(\ol\cL)^{d+1} \big) > (d+1) \adeg \big( \widehat c_1(\ol\cL)^{d} \cdot \widehat c_1(\ol\cM) \big).
\]
This inequality is the key ingredient to prove the submultiplicativity of the sequence $(\adeg_1(f^p))$. 

\begin{proof}[Proof of Theorem~\ref{thmC}]
Fix $X, Y , Z$ three smooth projective varieties of dimension $d$ defined over $\bQ$.
Let $f \colon X\dashrightarrow Y$ and $g \colon Y\dashrightarrow Z$ be two dominant rational maps.
Fix three regular models $\cX,\cY$ and $\cZ$ of $X,Y$ and $Z$ respectively and three arithmetically ample divisors $\ol \cH_\cX$, $\ol \cH_\cY$ and $\ol \cH_\cZ$ respectively.
Take $\Gamma_f,\Gamma_g$ the normalization of the graphs of $f \colon \cX \dashrightarrow \cY$ and $g \colon \cY \dashrightarrow \cZ$ respectively.
Consider also the normalization $\Gamma$ of the graph of the rational map $\Gamma_f \dashrightarrow \Gamma_g$ induced by $f$.
We also denote by $\pi_1, \pi_2$ the projection of $\Gamma_f$ onto the $\cX$ and $\cY$, $u,v$ the projection of $\Gamma$ onto $\Gamma_f$ and $\Gamma_g$, and by $\pi_3, \pi_4$ the projections of $\Gamma_g$ onto $\cY$ and $\cZ$ respectively. 
We thus obtain the following diagram.
\[
\xymatrix{  && \Gamma \ar[ld]_u \ar[rd]^v  & & \\
 & \Gamma_{f} \ar@{-->}[rr] \ar[ld]_{\pi_1} \ar[rd]^{\pi_2}  & & \Gamma_{g}  \ar[ld]_{\pi_3} \ar[rd]^{\pi_4} & \\
 \cX  \ar@{-->}[rr]_{f} & &  \cY \ar@{-->}[rr]_{g} & & \cZ.
}
\]

We now apply Yuan's estimates to 
$\ol \cL = u^* \pi_2^* \ol \cH_\cY$ and $\ol \cM = v^* \pi_4^* \ol \cH_\cZ$, the class
\begin{equation*}
(d+1) \dfrac{\adeg \big( \widehat c_1 (u^* \pi_2^* \ol \cH_\cY)^d \cdot \widehat c_1 (v^* \pi_4^* \ol \cH_\cZ) \big)}{\adeg \big( \widehat c_1 (u^* \pi_2^* \ol \cH_\cY)^{d+1} \big)} u^* \pi_2^* \ol \cH_\cY - v^* \pi_4^* \ol \cH_\cZ
\end{equation*}
is pseudo-effective.
Thus the intersection with the arithmetically nef class $u^* \pi_1^* \ol \cH_\cX$ yields that
\begin{equation} \label{eq_siu_applied}
\adeg_1(g\circ f) \leqslant (d+1) \dfrac{\adeg \big( \widehat c_1 (u^* \pi_2^* \ol \cH_\cY)^d \cdot \widehat c_1 (v^* \pi_4^* \ol \cH_\cZ) \big)}{\adeg \big( \widehat c_1 (u^* \pi_2^* \ol \cH_\cY)^{d+1} \big)} \adeg_1(f). 
\end{equation}
Since $u^* \pi_2^* = v^* \pi_3^*$, we have:
\begin{equation*}
\dfrac{\adeg \big( \widehat c_1 (u^* \pi_2^* \ol \cH_\cY)^d \cdot \widehat c_1 (v^* \pi_4^* \ol \cH_\cZ) \big)}{\adeg \big( \widehat c_1 (u^* \pi_2^* \ol \cH_\cY)^{d+1} \big)} = \dfrac{\adeg \big( \widehat c_1 (v^* \pi_3^* \ol \cH_\cY)^d \cdot \widehat c_1 (v^* \pi_4^* \ol \cH_\cZ) \big)}{\adeg \big( \widehat c_1 (v^* \pi_3^* \ol \cH_\cY)^{d+1} \big)}.
\end{equation*}
Using the projection formula (cf.~\cite[Proposition 2.3.1(iv)] {bost_gillet_soule}), we obtain:
\begin{equation*}
\dfrac{\adeg \big( \widehat c_1 (v^* \pi_3^* \ol \cH_\cY)^d \cdot \widehat c_1 (v^* \pi_4^* \ol \cH_\cZ) \big)}{\adeg \big( \widehat c_1 (v^* \pi_3^* \ol \cH_\cY)^{d+1} \big)} = \dfrac{\adeg \big( \widehat c_1 (\pi_3^* \ol \cH_\cY)^d \cdot \widehat c_1 (\pi_4^* \ol \cH_\cZ) \big)}{\adeg \big( \widehat c_1 (\pi_3^* \ol \cH_\cY)^{d+1} \big)} = \dfrac{\adeg_1(g)}{\adeg \big( \widehat c_1(\ol \cH_\cY)^{d+1} \big)}.
\end{equation*}
The above equalities together with the inequality \eqref{eq_siu_applied} yields:
\begin{equation*}
\adeg_1(g\circ f) \leqslant C \adeg_1(f) \adeg_1(g),
\end{equation*}
where $C = (d+1) / \adeg \big( \widehat c_1(\ol \cH_\cY)^{d+1} \big)$, and the theorem is proved.
\end{proof}

\subsection{Proof of Theorem \ref{thmE}}

By Theorem~\ref{thmB} applied to $k=1$, we have
\[
\alpha_1(f) \geqslant \max \{ \lambda_1(f), \lambda_0(f) \} = \lambda_1(f).
\]
For the converse inequality, we apply Theorem~\ref{thm_upper_bound}.  There is a constant $C>0$ such that for any integer $p$, there exist some points $x_i$ such that $  \sum a_i [x_i] = {\pi_1}_* (Z_{1,\epsilon}) \in Z^n(X)$ and
\begin{equation*}
\adeg_1(f^p) \leqslant \sum a_i h_{\ol\cH}(f^p x_i) + C \deg_1(f^p).
\end{equation*}
By Matsuzawa's theorem (cf.~\cite[Theorem~1.4]{matsuzawa}), the growth rate of the height $h_{\ol\cH}(f^p x_i)$ of any point $x_i$ is bounded above by $\lambda_1(f)$, so we obtain:
\begin{equation*}
\alpha_1(f) \leqslant \lambda_1(f).
\end{equation*}
We finally conclude that $\alpha_1(f) = \lambda_1(f)$, as required.
\qed


\bibliographystyle{amsalpha}
\bibliography{ref_relative_arithmetic_degree}

\end{document}